\documentclass[12pt,reqno]{article}

\usepackage[usenames]{color}
\usepackage{amssymb}
\usepackage{amsmath}
\usepackage{amsthm}
\usepackage{amsfonts}
\usepackage{amscd}
\usepackage{graphicx}
\usepackage[colorlinks=true,
linkcolor=webgreen,
filecolor=webbrown,
citecolor=webgreen]{hyperref}

\definecolor{webgreen}{rgb}{0,.5,0}
\definecolor{webbrown}{rgb}{.6,0,0}

\usepackage{color}
\usepackage{fullpage}
\usepackage{float}

\usepackage{graphics}
\usepackage{latexsym}

\begin{document}

\theoremstyle{plain}
\newtheorem{theorem}{Theorem}
\newtheorem{corollary}[theorem]{Corollary}
\newtheorem{proposition}{Proposition}
\newtheorem{lemma}{Lemma}
\newtheorem{example}{Example}
\newtheorem{remark}{Remark}

\begin{center}
{\bf \large
Abel-Type Transformations and Telescoping Structures\\[6pt] in Reciprocal Series of Second-Order Linear Recurrences}

\vskip 1cm

{\large
Kunle Adegoke \\
Department of Physics and Engineering Physics \\ Obafemi Awolowo University, 220005 Ile-Ife, Nigeria \\
\href{mailto:adegoke00@gmail.com}{\tt adegoke00@gmail.com}

\vskip 0.06 in

Robert Frontczak \\
Independent Researcher, 72764 Reutlingen, Germany \\
\href{mailto:robert.frontczak@web.de}{\tt robert.frontczak@web.de}

\vskip 0.06 in

Taras Goy \\
Faculty of Mathematics and Computer Science\\
Vasyl Stefanyk Carpathian National University \\
76018 Ivano-Frankivsk, Ukraine \\[3pt]
\href{mailto:taras.goy@pnu.edu.ua}{\tt taras.goy@pnu.edu.ua}
}

\end{center}

\vskip .0 in

\begin{abstract}
We develop a unified method for transforming and evaluating infinite series involving products of terms of second-order linear recurrences in the denominator. The approach is based on a discrete Abel-type summation formula (summation by parts), which converts reciprocal series with three or more factors into expressions exhibiting a partial telescoping structure. As a consequence, we obtain general transformation formulas for series of the form
$\sum\limits_{k=1}^{\infty} \frac{(\pm 1)^k}{w_{rk+l}\, w_{mk+s}\, w_{m(k+1)+s}}$,
together with extensions to products of four or more terms. These formulas provide a systematic framework that unifies and extends many known identities for Fibonacci and Lucas numbers.

In addition, the method leads to explicit evaluations and identities involving several classical combinatorial sequences, including Catalan numbers, harmonic numbers, and Stirling numbers of both kinds.

A key feature of the approach is that it naturally distinguishes between even and odd values of the parameter $m$, leading to structurally different representations. The results show that summation by parts is an effective and flexible tool for reducing multi-factor reciprocal sums to simpler forms.
\end{abstract}
\noindent 2020 {\it Mathematics Subject Classification}: Primary 05A10; Secondary 05A19, 11B37.

\medskip\noindent \emph{Keywords}: Second-order linear recurrence; Fibonacci numbers;  harmonic numbers;  reciprocal series; Abel summation;  telescoping series.

\section{Introduction}

Infinite series involving second-order linear recurrence sequences such as the Fibonacci, Lucas, and more general Horadam sequences have been studied extensively in number theory, combinatorics, and the theory of special functions. These series arise naturally in 
connection with generating functions, continued fractions, and various summation identities, and they often admit surprisingly explicit closed forms.

A particularly rich class of such identities consists of reciprocal sums involving products of consecutive or shifted terms, for example,
	\begin{equation*} 
	\sum_{k=1}^{\infty}\frac{1}{F_k F_{k+1}}, \qquad \sum_{k=1}^{\infty}\frac{1}{F_k F_{k+1} F_{k+2}},
	\end{equation*}
is called nowadays ``Brousseau sums''. These sums are named after Brousseau, who was the first to study them in the 
late 1960s \cite{Brousseau1967,Brousseau1967b}. More generally, Brousseau investigated 
the series
	\begin{equation*}
	\sum_{n=1}^{\infty}\frac{(\pm1)^n}{F_nF_{n+k_1}F_{n+k_2}\cdots F_{n+k_m}},
	\end{equation*}
where $(F_n)$ are Fibonacci numbers, $k_j$ are positive integers with $k_1<k_2<\ldots<k_m$. He, among other results, proved that
	\begin{gather*}
	\sum_{n=1}^{\infty}
	\frac{(-1)^n}{F_{n} F_{n+s}}
	=\frac{1}{F_s}
	\left(\frac{1-\sqrt5}{2}\,s+\sum_{j=1}^{s}\frac{F_{j-1}}{F_j}
	\right),\qquad s\geq1,\\
	\sum_{n=1}^{\infty}
	\frac{1}{F_{n}F_{n+1}F_{n+2}F_{n+3}F_{n+4}}
	=\frac{5}{18}+\frac23
	\sum_{n=1}^{\infty
	}\frac{(-1)^n}{F_{n}F_{n+1}F_{n+2}},
	\end{gather*}
and
	\begin{equation*}
	\sum_{n=1}^{\infty}
	\frac{1}{F_{n} F_{n+1}F_{n+2}F_{n+3}F_{n+4}F_{n+5}F_{n+6}F_{n+7}F_{n+8}}
	=
	\frac{319}{16380}
	\sum_{n=1}^{\infty}\frac{1}{F_n}
	-\frac{46816051}{715478400}.
	\end{equation*}

Shortly after the work of Brousseau, Carlitz \cite{Carlitz-71} derived, for every nonnegative integer $s$, representations of the series
	\begin{equation*}
	\sum_{n=1}^{\infty}
	\frac{(\pm1)^n}{F_nF_{n+1}\cdots F_{n+4k+s}},
	\qquad k\ge1,\quad s\in\{0,1,2,3\},
	\end{equation*}
in terms of Fibonomial coefficients.
Melham in \cite {Melham2002}
investigated the series
	\begin{equation*}
	\sum_{n=1}^{\infty}\frac{(\pm1)^n}{W_nW_{n+k_1}W_{n+k_2}\cdots W_{n+k_m}},
	\end{equation*}
where $(W_n)$ is a Horadam sequence (general second-order linear recurrence) and the integers $k_1<k_2<\ldots<k_m$ are positive.

In \cite{AFG1}, the authors of this paper evaluated several three-parameter families of reciprocal Horadam sums in closed form. Adegoke \cite{Adegoke} obtained several closed-form evaluations for infinite Fibonacci--Lucas series involving products of shifted Fibonacci or Lucas numbers in the denominator. In particular, for positive integers $m$, $n$, and $q$, formulas were obtained for series such as
	\begin{gather*}
	\sum_{k=1}^{\infty}
	\frac{(\pm1)^k}{F_{nk} F_{nk+nq} \cdots F_{nk+(m-1)nq} F_{nk+(m+1)nq} \cdots F_{nk+2mnq}},\\
	\sum_{k=1}^{\infty}
	\frac{(\pm1)^k}{L_{nk} L_{nk+nq} \cdots L_{nk+(m-1)nq} L_{nk+(m+1)nq} \cdots L_{nk+2mnq}},\\
	\sum_{k=1}^{\infty}
	\frac{(\pm1)^k L_{nk+mnq}}{F_{nk} F_{nk+nq} \cdots F_{nk+2mnq}},\qquad \sum_{k=1}^{\infty}
	\frac{(\pm1)^k F_{nk+mnq}}{L_{nk} L_{nk+nq} \cdots L_{nk+2mnq}}, 
	\end{gather*}
where $(L_n)$ denotes the sequence of Lucas numbers.

Numerous other results for this type of sums have been obtained in the literature, including both finite and infinite sums, alternating variants, and identities involving subsequences. The literature is vast and we can only present an incomplete selection of chronologically sorted articles: Good \cite{Good}, Bruckman and Good \cite{Bruckman}, Hoggatt and Bicknell \cite{Hoggatt1,Hoggatt2}, Melham and Shannon \cite{Melham-Shannon}, Andr\'{e}-Jeannin \cite{A-Jeannin}, Rabinowitz \cite{Rabinowitz1,Rabinowitz2}, Zhao \cite{Zhao}, Hu, Sun and Liu \cite{Hu}, Melham \cite{Melham2000,Melham2001,Melham2002,Melham2012,Melham2013,Melham2014a,Melham2014b,Melham2015a,Melham2015b}, K\i l\i \c c and Prodinger \cite{Kilic1}, Frontczak \cite{Frontczak2,Frontczak1}, Adegoke \cite{Adegoke}, Farhi \cite{Farhi}, K\i l\i \c c and Ersanl\i{} \cite{Kilic2}, Adegoke, Frontczak and Goy \cite{AFG-Tatra,AFG1},
Adegoke, Frontczak and Gryszka \cite{AFG2}.
It is also important to mention that additional appealing results have appeared in journals as problem proposals. Here we mention Ohtsuka with three recent results \cite{Ohtsuka1,Ohtsuka2,Ohtsuka3} who published many more and must be seen as an almost endless source for these relations.

However, most results for reciprocal sums with several factors in the denominator are obtained by ad hoc methods, often tailored to specific sequences or particular choices of parameters. As a result, a general framework that systematically explains and produces such identities is still lacking.

This paper aims to provide a unified approach to this problem based on a discrete Abel-type summation formula (summation by parts). 
The central idea is to transform reciprocal series involving three or more terms in the denominator into alternating sums that exhibit a 
partial telescoping structure. This transformation reduces the complexity of the original expressions and allows one to derive families of identities in a uniform and transparent way. Our main results provide general transformation formulas for infinite 
series of the form
	\begin{equation*}
	\sum_{k=1}^{\infty} \frac{(\pm1)^k}{w_{rk+l}  w_{mk+s}  w_{m(k+1)+s}},
	\end{equation*}
where $w_n$ is a second-order recurrent sequence, as well as extensions to series involving four or more factors in the denominator. A key feature of the method is that it naturally distinguishes between even and odd values of the parameter $m$, leading to qualitatively different representations in these two cases. These transformation formulas yield, as special cases, a variety of explicit identities for Fibonacci and Lucas numbers, including both classical formulas and new relations. In contrast to previous approaches, which typically rely on sequence-specific arguments, our method applies uniformly to a broad class of second-order recurrence sequences and systematically produces alternating and partially telescoping representations.

The paper is organized as follows. In Section 2 we establish the main summation lemma and derive transformation formulas for series with three factors. Section 3 is devoted to extensions to four or more factors and to several explicit examples and applications. In Section~4 we present additional applications involving other prominent sequences such as Catalan numbers and Stirling numbers. In Section 5 we investigate further applications of the Abel-type transformation by establishing a symmetric identity and deriving several consequences for general second-order recurrence sequences and, in particular, for the Fibonacci and Lucas numbers. Section 6 concludes our study.

\section{Transformation Formulas for Reciprocal Series}\label{Sect2}

Let $(w_n)=(w_n(a,b;p))$
be the second-order recurrence sequence defined by
\begin{equation*}
	w_n=pw_{n-1}+w_{n-2}, \qquad n\ge2,
	\end{equation*}
with initial values $w_0=a$ and $w_1=b$.

Let $(u_n)$ and $(v_n)$ denote the associated companion sequences defined by
	\begin{equation*}
	u_0=0,\quad u_1=1,\quad
	u_n=pu_{n-1}+u_{n-2},
	\end{equation*}
and
\begin{equation*}
	v_0=2,\quad v_1=p,\quad
	v_n=pv_{n-1}+v_{n-2},
	\end{equation*}
for $n\ge2$. Important special cases are
$F_n=w_n(0,1;1)$, $L_n=w_n(2,1;1)$, $P_n=w_n(0,1;2)$, $Q_n=w_n(2,2;2)$,
which are respectively the Fibonacci, Lucas, Pell, and Pell--Lucas sequences.
Certain polynomial sequences also arise as special cases of $(w_n)$. In particular,
$F_n(x)=w_n(0,1;x)$, $L_n(x)=w_n(2,x;x)$, $P_n(x)=w_n(0,1;2x)$, $Q_n(x)=w_n(2,2x;2x)$, which are respectively the Fibonacci, Lucas, Pell, and Pell--Lucas polynomials.

The main technical ingredient of the paper is a discrete Abel-type transformation (summation by parts), which allows one to convert reciprocal sums into forms exhibiting partial telescoping or alternating cancellation. This approach provides a unified framework for treating series involving products of recurrence terms in the denominator.
\begin{lemma}\label{main_lem}
	Let $(x_n)$ and $(y_n)$ be sequences of complex numbers. Then
	\begin{equation}\label{Lemma1_1}
		\sum_{k=1}^n x_k y_k = x_{n+1} \sum_{j=1}^n y_j - \sum_{k=1}^n (x_{k+1} - x_k)\sum_{j=1}^k y_j
		\end{equation}
and
	\begin{equation}\label{Lemma1_2}
		\sum_{k=1}^n x_k y_k = x_{n} \sum_{j=1}^n (-1)^{n-j} y_j + \sum_{k=1}^{n-1} (x_{k+1} + x_k)\sum_{j=1}^k (-1)^{k-j} y_j.
		\end{equation}
		
	In particular, if $\lim_{n\to\infty} {x_{n + 1} \sum_{j = 1}^n {y_j }}=0$, then
	
		\begin{equation}\label{Lem1_infty}
		\sum_{k = 1}^\infty  {x_k y_k }  = \sum_{k = 1}^\infty  {\left( {x_k  - x_{k + 1} } \right)\sum_{j = 1}^k {y_j } } .
		\end{equation}
\end{lemma}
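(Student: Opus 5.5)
We prove (\ref{Lemma1_1}) and (\ref{Lemma1_2}) separately and then obtain (\ref{Lem1_infty}) by passing to the limit in (\ref{Lemma1_1}). Both finite identities are summation by parts, so the natural method is to introduce the appropriate partial sums, write $y_k$ as a difference of them, and rearrange.

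For (\ref{Lemma1_1}), put $S_k=\sum_{j=1}^k y_j$ with $S_0=0$, so that $y_k=S_k-S_{k-1}$. Then
\[
\sum_{k=1}^n x_k y_k=\sum_{k=1}^n x_kS_k-\sum_{k=0}^{n-1}x_{k+1}S_k .
\]
In the second sum the $k=0$ term vanishes, and we may add and subtract the $k=n$ term $x_{n+1}S_n$. This gives $x_{n+1}S_n-\sum_{k=1}^n(x_{k+1}-x_k)S_k$, which is exactly (\ref{Lemma1_1}). One could also check it by induction on $n$: going from $n$ to $n+1$, the right-hand side changes by $x_{n+2}S_{n+1}-x_{n+1}S_n-(x_{n+2}-x_{n+1})S_{n+1}=x_{n+1}y_{n+1}$.

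For (\ref{Lemma1_2}), put $T_k=\sum_{j=1}^k(-1)^{k-j}y_j$ with $T_0=0$. These alternating partial sums satisfy $T_k+T_{k-1}=y_k$, so $y_k=T_k+T_{k-1}$. Substituting,
\[
\sum_{k=1}^n x_ky_k=\sum_{k=1}^n x_kT_k+\sum_{k=1}^{n-1}x_{k+1}T_k ,
\]
after reindexing the second sum and dropping $T_0$. Separating the $k=n$ term of the first sum gives $x_nT_n+\sum_{k=1}^{n-1}(x_{k+1}+x_k)T_k$, which is (\ref{Lemma1_2}). The only point needing care is the recursion $T_k=y_k-T_{k-1}$, which we verify directly from the definition.

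For (\ref{Lem1_infty}), let $n\to\infty$ in (\ref{Lemma1_1}). By hypothesis $x_{n+1}S_n\to0$. The finite identity then shows that the partial sums of $\sum x_ky_k$ converge exactly when the partial sums of $\sum(x_k-x_{k+1})S_k$ do, and that the two limits agree. This step is the main subtlety: the statement implicitly assumes that one of the two series converges, and we state the result under that interpretation. Under the hypothesis the proof is a one-line limit.
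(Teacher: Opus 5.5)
Your proof is correct, and for \eqref{Lemma1_1} it matches the paper. The paper applies the summation-by-parts formula with $f_k=x_k$ and $g_k=\sum_{j=1}^{k-1}y_j$. Since $g_{k+1}=S_k$ in your notation, this is your Abel rearrangement written differently.

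The routes differ for \eqref{Lemma1_2}. The paper does not compute it directly. It deduces it from \eqref{Lemma1_1} by the substitution $x_k\mapsto(-1)^kx_k$, $y_k\mapsto(-1)^ky_k$. In terms of your $T_k$, this turns \eqref{Lemma1_1} into $\sum_{k=1}^n x_ky_k=-x_{n+1}T_n+\sum_{k=1}^{n}(x_{k+1}+x_k)T_k$. The $k=n$ summand $(x_{n+1}+x_n)T_n$ is then merged with the boundary term to give $x_nT_n$.

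Your direct route via $y_k=T_k+T_{k-1}$ is self-contained. It also shows at once why the sum stops at $n-1$ and why the boundary term carries $x_n$ rather than $x_{n+1}$. The paper's route instead shows that \eqref{Lemma1_2} is just \eqref{Lemma1_1} applied to the sign-twisted sequences, so no second computation is needed.

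You also spell out the limit step for \eqref{Lem1_infty}, which the paper leaves implicit. Under $x_{n+1}S_n\to0$ the two series converge or diverge together, and agree when they converge.
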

\begin{proof} We apply the discrete summation by parts formula
		\begin{equation*}
		\sum_{k=1}^n f_k (g_{k+1}-g_k) = f_{n+1}g_{n+1}-f_1 g_1 - \sum_{k=1}^n g_{k+1}(f_{k+1}-f_k),
		\end{equation*}
	with $f_k = x_k$ and $g_k = \sum_{j=1}^{k-1} y_j$. 
	Since $g_{k+1}-g_k = y_k$, this yields \eqref{Lemma1_1}.
	
	The identity \eqref{Lemma1_2} follows from \eqref{Lemma1_1} by applying the transformations $x_k \mapsto (-1)^k x_k$ and $y_k \mapsto (-1)^k y_k$, followed by a straightforward rearrangement.
\end{proof}

Although elementary, Lemma \ref{main_lem} provides a flexible tool for transforming reciprocal sums. In particular, it allows one to systematically reduce expressions involving three or more factors in the denominator to sums with an inherent telescoping or alternating structure.

We now apply this transformation to series involving Horadam-type sequences.
\begin{theorem}\label{thm:main_transform}
	Assume the parameters are chosen so that all involved series converge. Let $r$, $s$, and $l$ be nonnegative integers.
	
	If $m$ is odd, then
		\begin{equation}\label{Th1_1}
		\sum_{k=1}^\infty
		\frac{1}{w_{rk+l}w_{mk+s}w_{m(k+1)+s}}
		=
		\frac{1}{u_m w_{m+s}}
		\sum_{k=1}^\infty (-1)^{k-1}
		\frac{\bigl(w_{r(k+1)+l}+w_{rk+l}\bigr)u_{mk}}
		{w_{rk+l} w_{r(k+1)+l} w_{m(k+1)+s}};
		\end{equation}
	while, if $m$ is even, then
		\begin{equation}\label{Th1_2}
		\sum_{k=1}^\infty
		\frac{(-1)^k}{w_{rk+l}w_{mk+s}w_{m(k+1)+s}}
		=
		\frac{1}{u_m w_{m+s}}
		\sum_{k=1}^\infty (-1)^k
		\frac{\bigl(w_{r(k+1)+l}+w_{rk+l}\bigr)u_{mk}}
		{w_{rk+l} w_{r(k+1)+l} w_{m(k+1)+s}}.
		\end{equation}
\end{theorem}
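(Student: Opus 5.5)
The plan is to apply the alternating form \eqref{Lemma1_2} of Lemma~\ref{main_lem} with $x_k = 1/w_{rk+l}$. For $y_k$ I take the reciprocal product $1/(w_{mk+s}w_{m(k+1)+s})$, carrying the sign $(-1)^k$ in the even case. Everything then hinges on one fact: the signed partial sums $\sum_{j=1}^k (-1)^{k-j}y_j$ telescope to a closed form. That closed form is a multiple of $u_{mk}/w_{m(k+1)+s}$.

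\textbf{Step 1 (telescoping identity).} I first establish the d'Ocagne/Vajda-type identity
\begin{equation*}
u_{a}w_{b}-u_{a-m}w_{b+m}=(-1)^{a-m}u_m w_{b-a+m}.
\end{equation*}
It can be checked via the Binet forms $w_n=A\alpha^n+B\beta^n$, $u_n=(\alpha^n-\beta^n)/(\alpha-\beta)$, with $\alpha\beta=-1$. For $a=m$ it reduces to $u_mw_b=u_mw_b$, which is a sanity check. Taking $a=mk$ and $b=mk+s$ gives
\begin{equation*}
\frac{u_{mk}}{w_{m(k+1)+s}}-\frac{u_{m(k-1)}}{w_{mk+s}}
=\frac{(-1)^{m(k-1)}u_m w_{m+s}}{w_{mk+s}w_{m(k+1)+s}}.
\end{equation*}
Summing over $k$ and using $u_0=0$, I obtain
\begin{equation*}
\sum_{j=1}^k \frac{(-1)^{m(j-1)}}{w_{mj+s}w_{m(j+1)+s}}=\frac{u_{mk}}{u_m w_{m+s}\,w_{m(k+1)+s}}.
\end{equation*}
I expect this step to be the main obstacle: getting the sign $(-1)^{a-m}$ right via Binet, and seeing that it is exactly what separates the two parities of $m$.

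\textbf{Step 2 (odd $m$).} Here $(-1)^{m(j-1)}=(-1)^{j-1}$. With $y_j=1/(w_{mj+s}w_{m(j+1)+s})$ this gives
\begin{equation*}
\sum_{j=1}^k(-1)^{k-j}y_j=(-1)^{k-1}\sum_{j=1}^k(-1)^{j-1}y_j=(-1)^{k-1}\frac{u_{mk}}{u_m w_{m+s}w_{m(k+1)+s}}.
\end{equation*}
Substituting into \eqref{Lemma1_2} and using $x_{k+1}+x_k=(w_{r(k+1)+l}+w_{rk+l})/(w_{rk+l}w_{r(k+1)+l})$ produces the right side of \eqref{Th1_1} truncated at $n-1$. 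The boundary term is $x_n\,(-1)^{n-1}u_{mn}/(u_mw_{m+s}w_{m(n+1)+s})$.

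\textbf{Step 3 (even $m$).} Here the sign in Step~1 is always $+1$. Taking $y_j=(-1)^j/(w_{mj+s}w_{m(j+1)+s})$ gives
\begin{equation*}
\sum_{j=1}^k(-1)^{k-j}y_j=(-1)^k\frac{u_{mk}}{u_mw_{m+s}w_{m(k+1)+s}},
\end{equation*}
and \eqref{Lemma1_2} yields \eqref{Th1_2} in the same way.

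\textbf{Step 4 (limit).} Finally I let $n\to\infty$. The boundary term is $x_n$ times $u_{mn}/w_{m(n+1)+s}$, up to sign. The ratio $u_{mn}/w_{m(n+1)+s}$ stays bounded, tending to a constant multiple of $\alpha^{-m}$ whenever $A\neq0$. Therefore the boundary term tends to $0$ as soon as $1/w_{rn+l}\to0$. This is covered by the standing convergence hypothesis, or by $r\ge1$ with $|\alpha|>1$; the degenerate case $r=0$ should be read under the convergence assumption.
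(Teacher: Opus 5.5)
Your proof is correct. It shares the paper's skeleton (summation by parts, a closed form for the inner partial sums, then $n\to\infty$), but the key telescoping step is done differently.

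Using \eqref{Lemma1_2} with unsigned $x_k=1/w_{rk+l}$, rather than the paper's \eqref{Lemma1_1} with $x_k=(-1)^k/w_{rk+l}$ and $y_k=(-1)^k/(w_{mk+s}w_{m(k+1)+s})$, is only cosmetic. The reason is that \eqref{Lemma1_2} is \eqref{Lemma1_1} under exactly that sign twist.

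The real difference is how the inner sum is closed.
\begin{itemize}
\item The paper writes $1/(w_{mj+s}w_{m(j+1)+s})$ as a difference of ratios $w_{n+1}/w_n$ via Horadam's identity $w_tw_{q+1}-w_{t+1}w_q=(-1)^qEu_{t-q}$, and telescopes. It then needs a second application of that identity, with $t=m(k+1)+s$ and $q=m+s$, to turn $\frac{w_{m+s+1}}{w_{m+s}}-\frac{w_{m(k+1)+s+1}}{w_{m(k+1)+s}}$ into a multiple of $u_{mk}/(w_{m+s}w_{m(k+1)+s})$. Only then does $E=b^2-a^2-abp$ cancel.
\item You go straight to the antidifference $u_{mj}/w_{m(j+1)+s}$, using the single identity $u_aw_b-u_{a-m}w_{b+m}=(-1)^{a-m}u_mw_{b-a+m}$. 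This lands on the same closed form in one step.
\end{itemize}

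Your route never divides by $E$. So it also covers the degenerate geometric case $E=0$ (e.g.\ $w_n=\alpha^n$), which the paper's intermediate step formally excludes even though the final formula contains no $E$. It also makes the odd/even split visible as the single sign $(-1)^{m(j-1)}$. In exchange, the paper's route rests on a classical, citable identity, whereas yours must be checked directly via Binet forms.

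Your discussion of the boundary term is also more explicit than the paper's one-line claim that it vanishes.
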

\begin{proof}
	To prove \eqref{Th1_1}, we apply the first identity of Lemma \ref{main_lem} with
	$x_k=\frac{(-1)^k}{w_{rk+l}}$
	{and}
	$y_k=\frac{(-1)^k}{w_{mk+s}w_{m(k+1)+s}}$.
	Then
		\begin{align}
		\sum_{k=1}^n \frac{1}{w_{rk+l}w_{mk+s}w_{m(k+1)+s}}
		&=
		\frac{(-1)^n}{w_{r(n+1)+l}}
		\sum_{j=1}^n
		\frac{(-1)^{j-1}}{w_{mj+s}w_{m(j+1)+s}}
		\nonumber\\
		&\quad+
		\sum_{k=1}^{n}(-1)^k
		\left(
		\frac{1}{w_{r(k+1)+l}}
		+\frac{1}{w_{rk+l}}
		\right)
		\sum_{j=1}^{k}
		\frac{(-1)^j}{w_{mj+s}w_{m(j+1)+s}}.
		\label{Th1_2_start}
		\end{align}
		
	To evaluate the inner sum, we use the identity \cite{Horadam1965}
		\begin{equation}
		w_t w_{q+1} - w_{t+1}w_q
		=
		(-1)^q E u_{t-q},
		\label{Horadam}
		\end{equation}
where $E=b^2-a^2-abp$.
	
	Taking $t=m(j+1)+s$ and $q=mj+s$, we obtain
	    \begin{equation*}
		w_{m(j+1)+s}w_{mj+s+1}
		-
		w_{m(j+1)+s+1}w_{mj+s}
		=
		(-1)^{mj+s}E u_m,
		\end{equation*}
	which yields
	\begin{equation*}
		\frac{1}{w_{mj+s}w_{m(j+1)+s}}
		=
		\frac{(-1)^{mj+s}}{E u_m}
		\left(
		\frac{w_{mj+s+1}}{w_{mj+s}}
		-
		\frac{w_{m(j+1)+s+1}}{w_{m(j+1)+s}}
		\right).
		\end{equation*}

	Multiplying by $(-1)^j$ and summing from $j=1$ to $k$, we obtain, for odd $m$, the telescoping sum
		\begin{equation*}
		\sum_{j=1}^k
		\frac{(-1)^j}{w_{mj+s}w_{m(j+1)+s}}
		=
		\frac{(-1)^s}{E u_m}
		\left(
		\frac{w_{m+s+1}}{w_{m+s}}
		-
		\frac{w_{m(k+1)+s+1}}{w_{m(k+1)+s}}
		\right),
		\end{equation*}
where the cancellation follows from the alternating structure.
	
Substituting this into \eqref{Th1_2_start} gives
		\begin{align*}
		&\sum_{k=1}^n
		\frac{1}{w_{rk+l}w_{mk+s}w_{m(k+1)+s}}
		\notag\\
		&\quad\qquad=
		\frac{(-1)^n}{w_{r(n+1)+l}}
		\sum_{j=1}^n
		\frac{(-1)^{j-1}}{w_{mj+s}w_{m(j+1)+s}}
		\notag\\
		&\qquad\qquad+
		\frac{(-1)^s}{E u_m}
		\sum_{k=1}^{n}
		(-1)^{mk}
		\frac{w_{rk+l}+w_{r(k+1)+l}}
		{w_{r(k+1)+l}w_{rk+l}}
		\left(
		\frac{w_{m+s+1}}{w_{m+s}}
		-
		\frac{w_{m(k+1)+s+1}}{w_{m(k+1)+s}}
		\right)
		\notag\\
		&\quad\qquad=
		\frac{(-1)^n}{w_{r(n+1)+l}}
		\sum_{j=1}^n
		\frac{(-1)^{j-1}}{w_{mj+s}w_{m(j+1)+s}}
		\notag\\
		&\qquad\qquad+
		\frac{(-1)^s}{E u_m}
		\sum_{k=1}^{n}
		(-1)^k
		\frac{w_{rk+l}+w_{r(k+1)+l}}
		{w_{r(k+1)+l}w_{rk+l}}
		\cdot
		\frac{
			w_{m+s+1}w_{m(k+1)+s}
			-
			w_{m(k+1)+s+1}w_{m+s}
		}
		{w_{m+s}w_{m(k+1)+s}}.
		\end{align*}
	
	Using \eqref{Horadam} again with $t=m(k+1)+s$ and $q=m+s$, we then obtain
		\begin{align*}
		\sum_{k=1}^n
		\frac{1}{w_{rk+l}w_{mk+s}w_{m(k+1)+s}}
		&=
		\frac{(-1)^n}{w_{r(n+1)+l}}
		\sum_{j=1}^n
		\frac{(-1)^{j-1}}{w_{mj+s}w_{m(j+1)+s}}
		\\
		&\quad
		-\frac{1}{u_m w_{m+s}}
		\sum_{k=1}^{n}
		(-1)^k
		\frac{(w_{rk+l}+w_{r(k+1)+l})u_{mk}}
		{w_{r(k+1)+l}w_{rk+l}w_{m(k+1)+s}}.
		\end{align*}
	
	Finally, letting $n\to\infty$, the boundary term vanishes, and thus \eqref{Th1_1} follows.
	
	The proof for even $m$ is analogous and is obtained by substituting
	$x_k=\frac{(-1)^k}{w_{rk+l}}$
	and $y_k=\frac{1}{w_{mk+s}w_{m(k+1)+s}}$ 	into \eqref{Lemma1_1}.
\end{proof}

Theorem \ref{thm:main_transform} shows that reciprocal series involving three Horadam-like numbers can be transformed into alternating sums with a built-in partial telescoping structure. This representation is particularly useful for deriving explicit identities. By selecting appropriate values for $r$, $l$, $s$, and $m$, we obtain the following classical-type and alternating identities for Fibonacci and Lucas numbers.
\begin{corollary}
		\begin{gather*}
		\sum_{k=1}^\infty \frac{1}{F_{k}^2 F_{k+1}} = \sum_{k=1}^\infty (-1)^{k-1} \frac{F_{k+2}}{F_{k+1}^2}, \qquad\qquad
		\sum_{k=1}^\infty \frac{1}{F_{k} F_{k+1} F_{k+2}} = \sum_{k=1}^\infty \frac{(-1)^{k-1}}{F_{k+1}}, \\
		\sum_{k=1}^\infty \frac{1}{L_{k}^2 L_{k+1}} = \sum_{k=1}^\infty (-1)^{k-1} \frac{F_{k}L_{k+2}}{L_k L_{k+1}^2}, \qquad\qquad
		\sum_{k=1}^\infty \frac{1}{L_{k} L_{k+1} L_{k+2}} = \frac{1}{3} \sum_{k=1}^\infty (-1)^{k-1} \frac{F_k}{L_k L_{k+1}}.
		\end{gather*}
\end{corollary}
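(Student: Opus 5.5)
All four identities are the case $m=1$ of \eqref{Th1_1} in Theorem~\ref{thm:main_transform}; this is allowed because $m=1$ is odd. I will take $r=1$ and $l=0$ throughout, and use only the two shifts $s=0$ and $s=1$. In each case the work is to fix $w_n$, read off $u_m$ and $w_{m+s}$, and simplify the summand on the right of \eqref{Th1_1}. The numerator there is $w_{r(k+1)+l}+w_{rk+l}=w_{k+1}+w_k$, and the recurrence with $p=1$ collapses it to $w_{k+2}$. The factor $u_{mk}$ becomes $u_k=F_k$, since with $p=1$ the sequence $u_n$ is the Fibonacci sequence. Before substituting, I will check convergence and the boundary term. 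All terms grow like $\alpha^k$ with $\alpha=(1+\sqrt5)/2$, so both series converge absolutely. The boundary term $\frac{(-1)^n}{w_{n+1}}\sum_{j\le n}(\cdots)$ tends to $0$, because the inner partial sums are bounded. Also, no index $0$ appears in a denominator, since $k\ge1$.

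For the Fibonacci identities I take $w=F$, with $a=0$, $b=1$, $p=1$. With $s=0$ the left side of \eqref{Th1_1} is $\sum 1/(F_k^2F_{k+1})$ and the prefactor is $1/(u_1F_1)=1$. The summand is $(-1)^{k-1}F_{k+2}F_k/(F_kF_{k+1}^2)=(-1)^{k-1}F_{k+2}/F_{k+1}^2$, which gives the first identity. With $s=1$ the left side is $\sum 1/(F_kF_{k+1}F_{k+2})$ and the prefactor is $1/F_2=1$. The summand is $(-1)^{k-1}F_{k+2}F_k/(F_kF_{k+1}F_{k+2})=(-1)^{k-1}/F_{k+1}$, which gives the second identity.

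For the Lucas identities I take $w=L$, with $a=2$, $b=1$, $p=1$, so that $u$ is still $F$. With $s=0$ the prefactor is $1/L_1=1$ and the summand is $(-1)^{k-1}L_{k+2}F_k/(L_kL_{k+1}^2)$, which is exactly the third identity. With $s=1$ the left side is $\sum1/(L_kL_{k+1}L_{k+2})$ and the prefactor is $1/L_2=1/3$. The factor $L_{k+2}$ cancels, leaving $(-1)^{k-1}F_k/(L_kL_{k+1})$, which is the fourth identity.

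There is no real obstacle here. The only points needing care are bookkeeping: matching each left-hand side to the correct $(r,l,s)$, remembering that $u_n$ is always the Fibonacci sequence when $p=1$ (even when $w=L$), and evaluating the prefactor $w_{m+s}$ correctly.
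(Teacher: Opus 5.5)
Your proposal is correct and is the paper's own argument: specialize \eqref{Th1_1} to $m=1$, $r=1$, with $s=0$ and $s=1$ for $w=F$ and for $w=L$, then simplify using $w_{k+1}+w_k=w_{k+2}$ and $u_k=F_k$. Your choice $l=0$ for the second pair is the right one. The paper's stated triple $(r,s,l)=(1,1,1)$ would give $\sum 1/(w_{k+1}^2w_{k+2})$ rather than $\sum 1/(w_kw_{k+1}w_{k+2})$, so it should read $(1,1,0)$.
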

\begin{proof}
	Substitute $m=1$ and $(r,s,l)=(1,0,0)$ and $(1,1,1)$ into \eqref{Th1_1}, separately for the Fibonacci and Lucas numbers.
\end{proof}
\begin{corollary}
		\begin{gather*}
		\sum_{k=1}^\infty \frac{(-1)^k}{F_{k}F_{2k} F_{2k+2}} = \sum_{k=1}^\infty (-1)^{k} \frac{F_{k+2}L_{k}}{F^2_{k+1}L_{k+1}}, \qquad\qquad
		\sum_{k=1}^\infty \frac{(-1)^k}{F^2_{2k} F_{2k+2}} = \sum_{k=1}^\infty (-1)^{k} \frac{L_{2k+1}}{F^2_{2k+2}}, \\
		\sum_{k=1}^\infty \frac{(-1)^k}{L_{k}L_{2k} L_{2k+2}} = \frac{1}{3} \sum_{k=1}^\infty (-1)^{k} \frac{L_{k+2}F_{k}}{L_{k+1}L_{2k+2}}, \qquad\qquad
		\sum_{k=1}^\infty \frac{(-1)^k}{L^2_{2k} L_{2k+2}} = \frac{5}{3} \sum_{k=1}^\infty (-1)^{k} \frac{F_{2k} F_{2k+1}}{L_{2k} L^2_{2k+2}}.
		\end{gather*}
\end{corollary}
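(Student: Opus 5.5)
The plan is to specialize the even case \eqref{Th1_2} of Theorem~\ref{thm:main_transform} with $m=2$ and $s=0$, once for the Fibonacci numbers $w_n=F_n$ and once for the Lucas numbers $w_n=L_n$. In both cases $p=1$, so the companion sequence is $u_n=F_n$ and $u_m=u_2=F_2=1$. The prefactor $1/(u_m w_{m+s})=1/(u_2w_2)$ is therefore $1/F_2=1$ in the Fibonacci case and $1/L_2=1/3$ in the Lucas case. This already accounts for the constants in the statement.

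For the first column of identities take $(r,l)=(1,0)$; for the second column take $(r,l)=(2,0)$. The left-hand side of \eqref{Th1_2} then matches exactly. Convergence is immediate, since every summand decays geometrically like $\alpha^{-ck}$ with $\alpha=(1+\sqrt5)/2$ and some $c>0$. The remaining work is to simplify the summand
\[
\frac{(w_{r(k+1)+l}+w_{rk+l})\,u_{2k}}{w_{rk+l}\,w_{r(k+1)+l}\,w_{2k+2}},
\]
using only standard identities:
\begin{itemize}
\item[(a)] $F_{2k}=F_kL_k$;
\item[(b)] $F_{k}+F_{k+1}=F_{k+2}$ and $L_k+L_{k+1}=L_{k+2}$;
\item[(c)] $F_{2k+2}=F_{k+1}L_{k+1}$;
\item[(d)] $F_{2k}+F_{2k+2}=L_{2k+1}$ and $L_{2k}+L_{2k+2}=5F_{2k+1}$.
\end{itemize}

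Concretely, the four summands simplify as follows.
\begin{itemize}
\item[(i)] For $F$ with $r=1$, the summand is $F_{k+2}F_kL_k/(F_kF_{k+1}F_{k+1}L_{k+1})$ by (a), (b), (c). This equals $F_{k+2}L_k/(F_{k+1}^2L_{k+1})$.
\item[(ii)] For $F$ with $r=2$, the factor $F_{2k}$ cancels, and (d) turns the summand into $L_{2k+1}/F_{2k+2}^2$.
\item[(iii)] For $L$ with $r=1$, the summand is $L_{k+2}F_kL_k/(L_kL_{k+1}L_{2k+2})$ by (a) and (b). Cancelling $L_k$ gives the stated form.
\item[(iv)] For $L$ with $r=2$, (d) gives $5F_{2k+1}F_{2k}/(L_{2k}L_{2k+2}^2)$, which produces the factor $5/3$.
\end{itemize}

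The only point needing care is this bookkeeping: choosing the right identity for each sum in (b) and (d), and remembering that in the Lucas case the factor $u_{mk}$ is still $F_{2k}$, not $L_{2k}$. There is no genuine obstacle beyond that.
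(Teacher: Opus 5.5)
Your proposal is correct and follows the paper's own route. The paper likewise specializes \eqref{Th1_2} with $m=2$ and $(r,s,l)=(1,0,0)$ and $(2,0,0)$ to $F_n$ and $L_n$. Your prefactors $1/(u_2w_2)\in\{1,\tfrac13\}$ and the four summand simplifications via $F_{2k}=F_kL_k$, $F_{2k}+F_{2k+2}=L_{2k+1}$ and $L_{2k}+L_{2k+2}=5F_{2k+1}$ all check out.
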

\begin{proof}
	Apply \eqref{Th1_2} with $m=2$ and $(r,s,l)=(1,0,0)$ and $(2,0,0)$, considering separately the cases of the Fibonacci and Lucas numbers.
\end{proof}

A complementary transformation is obtained by applying the Abel-type identity in a different form. 
\begin{theorem}\label{thm:second_transform}
	Assume that all series involved converge. Let $r$, $s$, and $l$ are nonnegative integers.
	
	If $m$ is even, then
		\begin{equation}\label{Th2_1}
		\sum_{k=1}^\infty 
		\frac{1}{w_{rk+l}w_{mk+s}w_{m(k+1)+s}} 
		= \frac{1}{u_m w_{m+s}}\sum_{k=1}^\infty 
		\frac{\big(w_{r(k+1)+l}- w_{rk+l}\big)u_{mk}}{w_{rk+l} w_{r(k+1)+l} w_{m(k+1)+s}};
		\end{equation}
	while if $m$ is odd, then
		\begin{equation}\label{Th2_2}
		\sum_{k=1}^\infty 
		\frac{(-1)^{k-1}}{w_{rk+l}w_{mk+s}w_{m(k+1)+s}} 
		= \frac{1}{u_m w_{m+s}}\sum_{k=1}^\infty 
		\frac{\big(w_{r(k+1)+l}- w_{rk+l}\big)u_{mk}}{w_{rk+l} w_{r(k+1)+l} w_{m(k+1)+s}}.
		\end{equation}
\end{theorem}
\begin{proof}
	The proof follows the same lines as that of Theorem \ref{thm:main_transform}, with \eqref{Lemma1_2} replacing \eqref{Lemma1_1}.
	For example, in the case where $m$ is even, we obtain
	\begin{align*}
		\sum_{k=1}^n
		\frac{1}{w_{rk+l}w_{mk+s}w_{m(k+1)+s}}
		&=
		\frac{1}{w_{rn+l}}
		\sum_{j=1}^n
		\frac{1}{w_{mj+s}w_{m(j+1)+s}}
		\\
		&\quad
		-\frac{1}{u_m w_{m+s}}
		\sum_{k=1}^{n-1}
		\frac{(w_{rk+l}-w_{r(k+1)+l})u_{mk}}
		{w_{r(k+1)+l}w_{rk+l}w_{m(k+1)+s}}.
		\end{align*}
	
	Passing to the limit as $n\to\infty$, we obtain \eqref{Th2_1}.
	
	The case where $m$ is odd is treated analogously and is therefore omitted.
\end{proof}

As a further illustration of Theorem \ref{thm:second_transform}, we present several explicit identities for Fibonacci and Lucas numbers.
The following non-alternating and alternating identities are direct consequences of \eqref{Th2_1} and \eqref{Th2_2}.
\begin{corollary}
		\begin{gather*}
		\sum_{k=1}^\infty \frac{1}{F_{k} F_{2k} F_{2k+2}} = \sum_{k=1}^\infty \frac{F_{k-1}L_k}{L_{k+1} F_{k+1}^2}, \qquad\qquad 
		\sum_{k=1}^\infty \frac{1}{F_{2k}^2 F_{2k+2}} = \sum_{k=1}^\infty \frac{F_{2k+1}}{F_{2k+2}^2}, \\
		\sum_{k=1}^\infty \frac{1}{L_{k} L_{2k} L_{2k+2}} = \frac{1}{3} \sum_{k=1}^\infty \frac{F_{k}L_{k-1}}{L_{k+1} L_{2k+2}}, \qquad\qquad 
		\sum_{k=1}^\infty \frac{1}{L_{2k}^2 L_{2k+2}} = \frac{1}{3} \sum_{k=1}^\infty \frac{F_{2k}L_{2k+1}}{L_{2k} L_{2k+2}^2}.
		\end{gather*}
\end{corollary}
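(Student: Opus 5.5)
All four identities are the case $m=2$ of \eqref{Th2_1} in Theorem \ref{thm:second_transform}, with $s=0$ and $p=1$. The two left-hand identities take $(r,l)=(1,0)$, and the two right-hand ones take $(r,l)=(2,0)$. The first row uses $w=F$ and the second row uses $w=L$. In every case $u_m=u_2=F_2=1$ and $u_{mk}=F_{2k}$. The prefactor $1/(u_m w_{m+s})$ is therefore $1/F_2=1$ for Fibonacci and $1/L_2=1/3$ for Lucas, which explains the factors $1$ and $\tfrac13$.

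Convergence is not an issue. All terms grow like $\alpha^{k}$ with $\alpha=(1+\sqrt5)/2$, so both sides converge absolutely. The boundary term $\frac{1}{w_{rn+l}}\sum_{j\le n}\frac{1}{w_{2j}w_{2j+2}}$ in the proof of Theorem \ref{thm:second_transform} tends to $0$, so \eqref{Th2_1} applies.

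It remains to simplify each summand $\frac{(w_{r(k+1)}-w_{rk})F_{2k}}{w_{rk}w_{r(k+1)}w_{2k+2}}$. This is the only real work, and it needs just three standard facts:
\[
F_{k+1}-F_k=F_{k-1},\qquad L_{k+1}-L_k=L_{k-1},\qquad F_{2k}=F_kL_k,
\]
together with $F_{2k+2}-F_{2k}=F_{2k+1}$ and $L_{2k+2}-L_{2k}=L_{2k+1}$.

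\begin{itemize}
\item $w=F$, $r=1$: the summand is $\frac{F_{k-1}F_kL_k}{F_kF_{k+1}F_{k+1}L_{k+1}}$, using $F_{2k+2}=F_{k+1}L_{k+1}$. This reduces to $\frac{F_{k-1}L_k}{L_{k+1}F_{k+1}^2}$.
\item $w=F$, $r=2$: the summand is $\frac{F_{2k+1}F_{2k}}{F_{2k}F_{2k+2}^2}=\frac{F_{2k+1}}{F_{2k+2}^2}$.
\item $w=L$, $r=1$: the summand is $\frac{L_{k-1}F_kL_k}{L_kL_{k+1}L_{2k+2}}=\frac{F_kL_{k-1}}{L_{k+1}L_{2k+2}}$, multiplied by the prefactor $\tfrac13$.
\item $w=L$, $r=2$: the summand is $\frac{L_{2k+1}F_{2k}}{L_{2k}L_{2k+2}^2}$, multiplied by the prefactor $\tfrac13$.
\end{itemize}

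Each of these matches the stated right-hand side. The only point requiring care is using $F_{2k+2}=F_{k+1}L_{k+1}$ in the first case to cancel $F_k$ and obtain the stated form; the rest is routine bookkeeping.
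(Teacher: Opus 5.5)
Your proposal is correct and is the paper's proof: specialize \eqref{Th2_1} to $m=2$, $s=l=0$, $r\in\{1,2\}$ for $w=F$ and $w=L$, with prefactor $1/(u_2w_2)$ equal to $1$ or $\tfrac13$, and simplify using $F_{2k}=F_kL_k$ and the difference identities. You also write out the summand simplifications and the vanishing of the boundary term (true because $r\ge 1$), which the paper leaves to the reader.
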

\begin{proof}
	Substitute $m=2$ and $(r,s,l)=(1,0,0)$ and $(2,0,0)$ into \eqref{Th2_1}, separately for the Fibonacci and Lucas numbers.
\end{proof}
\begin{corollary}
	\begin{gather*}
		\sum_{k=1}^\infty \frac{(-1)^{k-1}}{F^2_{k} F_{k+1}} = \sum_{k=1}^\infty \frac{F_{k-1}}{F^2_{k+1}}, \qquad\qquad 
		\sum_{k=1}^\infty \frac{(-1)^{k-1}}{L_{k}L_{k+1}L_{k+2}} = \frac13\sum_{k=1}^\infty \frac{L_{k-1}F_{k}}{L_{k}L_{k+1}L_{k+2}}, \\
		\sum_{k=1}^\infty \frac{(-1)^{k-1}}{F_{k} F_{k+1} F_{k+2}} = \sum_{k=1}^\infty \frac{F_{k-1}}{F_{k+1} F_{k+2}}, \qquad\qquad 
		\sum_{k=1}^\infty \frac{(-1)^{k-1}}{L_{k}^2 L_{k+1}} =  \sum_{k=1}^\infty \frac{L_{k-1}F_{k}}{L^2_{k+1} L_{k}}.
		\end{gather*}
\end{corollary}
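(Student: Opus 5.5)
The plan is to specialize the second transformation, \eqref{Th2_2} of Theorem~\ref{thm:second_transform}, to $m=1$ (the odd case), with $r=1$, $l=0$ throughout and $s\in\{0,1\}$. We take $p=1$, so that $u_n=F_n$ and $u_m=u_1=1$. The choice $s=0$ gives denominators $w_k^2w_{k+1}$, and $s=1$ gives $w_kw_{k+1}w_{k+2}$. We apply each choice once with $w=F$ ($a=0,b=1$) and once with $w=L$ ($a=2,b=1$), which yields exactly the four stated identities.

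The key simplification on the right-hand side is the numerator factor $w_{r(k+1)+l}-w_{rk+l}=w_{k+1}-w_k=w_{k-1}$, which follows from the recurrence with $p=1$. The prefactor is $1/(u_1w_{1+s})=1/w_{1+s}$.
\begin{itemize}
\item For $F$, $s=0$: the prefactor is $1/F_1=1$, and the summand is $F_{k-1}F_k/(F_kF_{k+1}F_{k+1})=F_{k-1}/F_{k+1}^2$.
\item For $F$, $s=1$: the prefactor is $1/F_2=1$, and the summand is $F_{k-1}F_k/(F_kF_{k+1}F_{k+2})=F_{k-1}/(F_{k+1}F_{k+2})$.
\item For $L$, $s=1$: the prefactor is $1/L_2=1/3$, and the summand is $L_{k-1}F_k/(L_kL_{k+1}L_{k+2})$.
\item For $L$, $s=0$: the prefactor is $1/L_1=1$, and the summand is $L_{k-1}F_k/(L_kL_{k+1}^2)$.
\end{itemize}
Each of these matches the corresponding right-hand side in the statement. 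On the left, the sign is $(-1)^{k-1}$, as required.

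The only remaining hypothesis of Theorem~\ref{thm:second_transform} is convergence, which must be checked. Every left-hand series has terms of size $O(\alpha^{-3k})$, where $\alpha$ is the golden ratio, and every right-hand series has terms of size $O(\alpha^{-k})$, so all series converge absolutely. We should also confirm the boundary term used in the proof of \eqref{Th2_2}. It is $\frac{1}{w_{n}}$ times a bounded partial sum, so it tends to $0$.

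No step is a genuine obstacle. The one point needing care is matching the indices: the denominator $F_kF_{k+1}F_{k+2}$ requires $(r,s,l)=(1,1,0)$, not $(1,1,1)$, and the second factor of the product $u_{mk}$ in \eqref{Th2_2} must be read as $F_k$ for both sequences.
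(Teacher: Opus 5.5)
Your proposal is correct and follows the same route as the paper: specialize \eqref{Th2_2} to $m=1$, $p=1$ (so $u_k=F_k$), once for the Fibonacci and once for the Lucas numbers. Your parameter choices $(r,s,l)=(1,0,0)$ and $(1,1,0)$ are the right ones; the paper's proof lists $(2,0,0)$ for the second, which would give the denominator $w_{2k}w_kw_{k+1}$ and appears to be a typo. Your simplification $w_{k+1}-w_k=w_{k-1}$ and the prefactors $1/F_1$, $1/F_2$, $1/L_1$, $1/L_2$ all check out.
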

\begin{proof}
	The result follows by applying \eqref{Th2_2} with $m=1$ and choosing $(r,s,l)=(1,0,0)$ and $(2,0,0)$, respectively, for the Fibonacci and Lucas numbers.
\end{proof}

Theorems \ref{thm:main_transform} and \ref{thm:second_transform} provide two complementary transformation schemes: one based on alternating telescoping, and another based on finite differences of the numerator sequence. These dual representations form the basis for the extensions developed in the next section.

\section{Series with Four and More Factors}

The transformation method developed in the previous section extends naturally to reciprocal series involving four or more factors. In this case, the resulting identities exhibit a higher-order telescoping structure, reflecting the interaction between consecutive terms of the underlying recurrence.

We first consider the case of four factors.
\begin{theorem}\label{thm:four_factors}
	Assume that all series involved converge. Let $r$, $s$, and $l$ are nonnegative integers. 
	
	If $m$ is odd, then	
		\begin{align}\label{Th3_1}
		&\sum_{k=1}^\infty 
		\frac{1}{w_{rk+l}w_{r(k+1)+l}w_{mk+s}w_{m(k+1)+s}}\notag\\
		&\qquad\qquad= \frac{1}{u_m w_{m+s}}\sum_{k=1}^\infty (-1)^{k-1} \frac{\big(w_{rk+l}+w_{r(k+2)+l}\big)u_{mk}}
		{w_{rk+l} w_{r(k+1)+l} w_{r(k+2)+l} w_{m(k+1)+s}};
		\end{align}	
	while if $m$ is even, then	
		\begin{align}\label{Th3_2}
		&\sum_{k=1}^\infty 
		\frac{(-1)^k}{w_{rk+l}w_{r(k+1)+l}w_{mk+s}w_{m(k+1)+s}}\notag\\
		&\qquad\qquad = \frac{1}{u_m w_{m+s}}\sum_{k=1}^\infty (-1)^k \frac{\big(w_{rk+l}+w_{r(k+2)+l}\big)u_{mk}}
		{w_{rk+l} w_{r(k+1)+l} w_{r(k+2)+l} w_{m(k+1)+s}}.
		\end{align}	
\end{theorem}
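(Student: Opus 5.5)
The proof repeats the argument of Theorem~\ref{thm:main_transform}, with the single factor $1/w_{rk+l}$ replaced by the product $1/(w_{rk+l}w_{r(k+1)+l})$. For odd $m$ we apply \eqref{Lemma1_1} with
\[
x_k=\frac{(-1)^k}{w_{rk+l}\,w_{r(k+1)+l}},\qquad y_k=\frac{(-1)^k}{w_{mk+s}w_{m(k+1)+s}},
\]
so that $x_ky_k$ is exactly the summand on the left of \eqref{Th3_1}. The partial sums $\sum_{j=1}^k y_j$ are the ones already computed in the proof of Theorem~\ref{thm:main_transform}. Using \eqref{Horadam} with $t=m(j+1)+s$, $q=mj+s$, and using that $m$ is odd, the sum telescopes to
\[
\frac{(-1)^s}{Eu_m}\Bigl(\frac{w_{m+s+1}}{w_{m+s}}-\frac{w_{m(k+1)+s+1}}{w_{m(k+1)+s}}\Bigr).
\]
A second application of \eqref{Horadam}, with $t=m(k+1)+s$ and $q=m+s$, rewrites this as $-u_{mk}/\bigl(u_m\,w_{m+s}\,w_{m(k+1)+s}\bigr)$, up to the sign $(-1)^{mk}=(-1)^k$. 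These two steps can be quoted verbatim from the earlier proof.

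The only new ingredient is the difference $x_{k+1}-x_k$. Since the signs of $x_{k+1}$ and $x_k$ are opposite, it equals
\[
x_{k+1}-x_k=(-1)^{k+1}\Bigl(\frac{1}{w_{r(k+1)+l}w_{r(k+2)+l}}+\frac{1}{w_{rk+l}w_{r(k+1)+l}}\Bigr)=(-1)^{k+1}\frac{w_{rk+l}+w_{r(k+2)+l}}{w_{rk+l}w_{r(k+1)+l}w_{r(k+2)+l}}.
\]
This is precisely where the numerator $w_{rk+l}+w_{r(k+2)+l}$ and the triple product in the denominator of \eqref{Th3_1} come from. Substituting this and the closed form of the inner sum into $-\sum_k (x_{k+1}-x_k)\sum_{j\le k}y_j$ gives the summand on the right of \eqref{Th3_1}.

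The main obstacle is sign bookkeeping. We must collect $(-1)^{k+1}$ from the difference, $(-1)^{mk}=(-1)^k$ from the telescoping, $(-1)^s$ from \eqref{Horadam}, and the sign $(-1)^{m+s}$ produced by the second use of \eqref{Horadam}, together with the leading minus sign in \eqref{Lemma1_1}. These should combine to the overall factor $(-1)^{k-1}/(u_m w_{m+s})$. I would check the final sign numerically, for instance with Fibonacci numbers, $m=r=1$ and $s=l=0$.

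The boundary term $x_{n+1}\sum_{j\le n}y_j$ tends to zero, because the inner sums stay bounded while $x_{n+1}\to0$ under the convergence hypothesis. Letting $n\to\infty$ then gives \eqref{Th3_1}.

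For even $m$ we keep the same $x_k$ but take $y_k=1/(w_{mk+s}w_{m(k+1)+s})$. Now $(-1)^{mj}=1$, so the inner sum telescopes without alternating signs, and the left-hand side acquires the factor $(-1)^k$. The computation is otherwise identical and yields \eqref{Th3_2}.
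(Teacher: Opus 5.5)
Your route is the paper's own: the same $x_k$ and $y_k$ in \eqref{Lemma1_1} (with $y_k=1/(w_{mk+s}w_{m(k+1)+s})$ for even $m$), the same two applications of \eqref{Horadam}, and the same limiting argument. Your formula for $x_{k+1}-x_k$ is also correct.

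The one thing to fix is the sign inventory for odd $m$. As written it is wrong, not merely unchecked.

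The inner sum has no $k$-dependent sign. The first use of \eqref{Horadam} gives $(-1)^{mj+s}$, and the $(-1)^j$ built into $y_j$ cancels $(-1)^{mj}$ because $m$ is odd. The second use then gives exactly
\[
\sum_{j=1}^k y_j=\frac{(-1)^s(-1)^{m+s}u_{mk}}{u_m w_{m+s}w_{m(k+1)+s}}=-\frac{u_{mk}}{u_m w_{m+s}w_{m(k+1)+s}},
\]
with no extra factor $(-1)^{mk}$.

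Now multiply the signs you list, including ``$(-1)^{mk}=(-1)^k$ from the telescoping''. You get $(-1)^{k+1}(-1)^k(-1)^s(-1)^{m+s}(-1)=(-1)^m=-1$. That is a non-alternating right-hand side, which contradicts \eqref{Th3_1}.

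The $(-1)^{mk}$ visible in the proof of Theorem~\ref{thm:main_transform} is just the $(-1)^k$ from $x_{k+1}-x_k$, rewritten using that $m$ is odd. You have counted it twice. Once you drop it, the term $-(x_{k+1}-x_k)\sum_{j\le k}y_j$ carries the sign $-(-1)^{k+1}(-1)^m=(-1)^{k-1}$, which is exactly \eqref{Th3_1}.

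The even case is unaffected, since there $(-1)^{mk}=1$. With this correction, your argument is complete at the paper's level of rigor.
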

\begin{proof}
	For proving \eqref{Th3_1}, we 	apply \eqref{Lemma1_1} 
	with
	$x_k=\frac{(-1)^k}{w_{rk+l}w_{r(k+1)+l}}$ and $y_k=\frac{(-1)^k}{w_{mk+s}w_{m(k+1)+s}}$.  This gives 
		\begin{align*}
		\sum_{k=1}^n &\frac{1}{w_{rk+l}w_{r(k+1)+l}w_{mk+s}w_{m(k+1)+s}}\\
		&\qquad=
		\frac{(-1)^{n}}{w_{r(n+1)+l}w_{r(n+2)+l}} \sum_{j=1}^n \frac{(-1)^{j-1}}{w_{mj+s}w_{m(j+1)+s}}\\
		&\qquad\quad +
		\sum_{k=1}^{n} (-1)^{k} \left(\frac{1}{w_{r(k+1)+l}w_{r(k+2)+l}}+\frac{1}{w_{rk+l}w_{r(k+1)+l}}\right) \sum_{j=1}^k  \frac{(-1)^j}{w_{mj+s}w_{m(j+1)+s}}.
		\end{align*}
	
	The inner sum is evaluated exactly as in the proof of Theorem \ref{thm:main_transform}. Substituting its telescoping form into the above identity, using identity \eqref{Horadam}, and letting $n\to\infty$, we obtain \eqref{Th3_1}.
	
	The proof of \eqref{Th3_2} is completely analogous and is therefore omitted.
\end{proof}

Theorem \ref{thm:four_factors} shows that the Abel-type transformation extends to higher-order products, producing identities with an additional layer of telescoping involving three consecutive terms of the sequence $(w_n)$. We now present several explicit identities for Fibonacci and Lucas numbers that follow from Theorem \ref{thm:four_factors}.
\begin{corollary}
		\begin{gather*}
		\sum_{k=1}^\infty \frac{1}{F_{k}F_{k+1}F_{k+2}F_{k+3}} = \frac{1}{2}\sum_{k=1}^\infty (-1)^{k-1}  \frac{L_{k+1}}{F_{k+1}F_{k+2}F_{k+3}},\\
		\sum_{k=1}^\infty \frac{1}{L_{k} L_{k+1} L_{k+2}L_{k+3}} = \frac{5}{4} \sum_{k=1}^\infty (-1)^{k-1} \frac{ F_{k+1}}{L_kL_{k+1}L_{k+2}L_{k+3}},\\
		\sum_{k=1}^\infty \frac{1}{F_{k}^2 F_{k+1}^2} = \sum_{k=1}^\infty (-1)^{k-1} \frac{L_{k+1}}{F_{k+1}^2F_{k+2}},\qquad\qquad 
		\sum_{k=1}^\infty \frac{1}{L_{k}^2 L_{k+1}^2} = 5 \sum_{k=1}^\infty (-1)^{k-1} \frac{ F_k F_{k+1}}{L_{k}L_{k+1}^2L_{k+2}},
		\end{gather*}
	and 
		\begin{gather*}
		\sum_{k=1}^\infty \frac{(-1)^k}{F_{k} F_{k+1} F_{2k}F_{2k+2}} = \sum_{k=1}^\infty (-1)^{k} \frac{L_k}{F_{k+1}^2 F_{k+2}},\\
		\sum_{k=1}^\infty \frac{(-1)^k}{L_{k} L_{k+1} L_{2k}L_{2k+2}} = \frac{5}{3} \sum_{k=1}^\infty   (-1)^{k} \frac{F_kF_{k+1}}{L_{k+1} L_{k+2}L_{2k+2}},\\
		\sum_{k=1}^\infty \frac{(-1)^k}{F_{2k}^2 F_{2k+2}^2} = 3\sum_{k=1}^\infty  \frac{(-1)^{k}}{F_{2k+2}F_{2k+4}},\qquad\qquad
		\sum_{k=1}^\infty \frac{(-1)^k}{L_{2k}^2 L_{2k+2}^2} = \sum_{k=1}^\infty (-1)^{k} \frac{F_{2k}}{L_{2k} L_{2k+2}L_{2k+4}}.
		\end{gather*}
\end{corollary}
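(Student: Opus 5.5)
Each of the eight identities is a direct specialization of Theorem~\ref{thm:four_factors}. The first four come from \eqref{Th3_1} with $m=1$, and the last four from \eqref{Th3_2} with $m=2$, applied separately to $w=F$ (so $u=F$, $E=1$) and to $w=L$ (where again $u_n=F_n$). The convergence hypothesis causes no trouble, because every summand decays geometrically in $k$. In each case I read off $(r,l,m,s)$ by matching the denominator on the left, evaluate the constant $1/(u_m w_{m+s})$, and simplify the numerator and denominator on the right.

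For the non-alternating sums I make the following choices.
\begin{itemize}
\item For $\sum 1/(F_kF_{k+1}F_{k+2}F_{k+3})$ I take $r=1$, $l=0$, $m=1$, $s=2$, so $w_{m+s}=F_3=2$. This gives the prefactor $\tfrac12$.
\item For the two squared sums $\sum 1/(F_k^2F_{k+1}^2)$ and $\sum 1/(L_k^2L_{k+1}^2)$ I take $r=1$, $l=0$, $m=1$, $s=0$. Then $w_{m+s}=F_1=1$ or $L_1=1$.
\item For the Lucas four-term sum I take $s=2$, so that $w_{m+s}=L_3=4$.
\end{itemize}
The numerators simplify by $F_k+F_{k+2}=L_{k+1}$ and $L_k+L_{k+2}=5F_{k+1}$, multiplied by $u_{mk}=F_k$. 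In the Fibonacci cases the factor $F_k$ cancels against $w_{rk+l}=F_k$ in the denominator. This yields, for instance, $\tfrac12\,L_{k+1}/(F_{k+1}F_{k+2}F_{k+3})$. In the Lucas cases it produces the constants $5/4$ and $5$ together with the stated fractions.

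For the alternating sums, \eqref{Th3_2} with $m=2$ has $u_2=1$, $F_2=1$ and $L_2=3$. With $r=1$, $l=0$, $s=0$ I use $u_{2k}=F_{2k}=F_kL_k$ and $F_{2k+2}=F_{k+1}L_{k+1}$.
\begin{itemize}
\item In the Fibonacci case, $F_k$ cancels and $L_{k+1}$ cancels. What remains is $L_k/(F_{k+1}^2F_{k+2})$.
\item In the Lucas case, $L_k+L_{k+2}=5F_{k+1}$ and $F_{2k}/L_k=F_k$. Together these give $\tfrac53\,F_kF_{k+1}/(L_{k+1}L_{k+2}L_{2k+2})$.
\end{itemize}
With $r=2$, $l=0$, $s=0$ I need $F_{2k}+F_{2k+4}=3F_{2k+2}$ and $L_{2k}+L_{2k+4}=3L_{2k+2}$, both instances of $w_n+w_{n+4}=3w_{n+2}$. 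In the Fibonacci case, $u_{2k}=F_{2k}$ cancels with $w_{rk+l}=F_{2k}$, and $F_{2k+2}$ cancels once. This leaves $3/(F_{2k+2}F_{2k+4})$. In the Lucas case, the factor $3$ cancels the prefactor $1/L_2=1/3$, leaving $F_{2k}/(L_{2k}L_{2k+2}L_{2k+4})$.

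There is no real obstacle here. The only care needed is bookkeeping:
\begin{itemize}
\item choose $s$ so that $w_{mk+s}$ and $w_{m(k+1)+s}$ reproduce exactly the last two factors on the left;
\item consistently apply the standard identities $F_k+F_{k+2}=L_{k+1}$, $L_k+L_{k+2}=5F_{k+1}$, $F_{2k}=F_kL_k$ and $w_n+w_{n+4}=3w_{n+2}$ to reach the displayed normalized forms.
\end{itemize}
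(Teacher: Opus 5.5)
You follow the paper's route, specializing Theorem~\ref{thm:four_factors} with $m=1$ and $m=2$, and seven of the eight identities come out exactly as you describe. Your choice $s=2$ for the two four-consecutive-term sums is in fact what is needed: with $m=1$, the paper's listed triples $(r,s,l)=(1,0,0),(2,0,0)$ do not give four consecutive factors.

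The gap is your sentence that the Lucas cases produce ``the stated fractions.'' Take $\sum_{k\ge1} 1/(L_kL_{k+1}L_{k+2}L_{k+3})$ with $r=1$, $l=0$, $m=1$, $s=2$. The right side of \eqref{Th3_1} is
\[
\frac{1}{L_3}\sum_{k=1}^\infty(-1)^{k-1}\frac{(L_k+L_{k+2})F_k}{L_kL_{k+1}L_{k+2}L_{k+3}}
=\frac54\sum_{k=1}^\infty(-1)^{k-1}\frac{F_kF_{k+1}}{L_kL_{k+1}L_{k+2}L_{k+3}}.
\]
The factor $u_{mk}=F_k$ has nothing to cancel against, because the denominator contains $L_k$, not $F_k$. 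You kept this factor correctly in the squared Lucas case, where the target does show $F_kF_{k+1}$. The printed second identity has only $F_{k+1}$ in the numerator, so your computation does not arrive at it.

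No other bookkeeping can reach it either, because the printed identity is false:
\begin{itemize}
\item The left side is $\frac1{84}+\frac1{924}+\frac1{5544}+\cdots\approx 0.0131966$.
\item The printed right side is $\frac54\bigl(\frac1{84}-\frac2{924}+\frac3{5544}-\frac5{40194}+\cdots\bigr)\approx 0.0127262$.
\item The two series part ways at $k=3$, where the correct numerator is $F_3F_4=6$ rather than $F_4=3$.
\item With $F_k$ restored, $\frac54\bigl(\frac1{84}-\frac2{924}+\frac6{5544}-\frac{15}{40194}+\cdots\bigr)\approx 0.0131966$, which agrees with the left side.
\end{itemize}
So the second identity must be corrected to $\frac54\sum_{k\ge1}(-1)^{k-1}F_kF_{k+1}/(L_kL_{k+1}L_{k+2}L_{k+3})$ before it can be proved. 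Any proof of the statement as printed must therefore fail at this item. The step ``simplify to the displayed form'' has to be carried out explicitly in each of the eight cases, not asserted.
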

\begin{proof}
	Apply \eqref{Th3_1} with $m=1$, and \eqref{Th3_2} with $m=2$ and $(r,s,l)=(1,0,0)$ and $(2,0,0)$, considering separately the cases of the Fibonacci and Lucas numbers.
\end{proof}

Building on the ideas of Theorems 1-3, we extend these results to the case of an arbitrary finite number of factors in the denominator. The proof follows along similar lines and is therefore omitted. 
\begin{theorem}\label{thm:n_factors}
	Assume that all series involved converge. Let $r$, $s$, and $l$ are nonnegative integers. 
	
	If $m$ is odd, then
		\begin{gather}\label{Th4_1}
		\sum_{k=1}^\infty 
		\frac{1}{w_{mk+s}w_{m(k+1)+s}\prod\limits_{j=0}^{n-3}w_{r(k+j)+l}}
		= \frac{1}{u_m w_{m+s}}\sum_{k=1}^\infty (-1)^{k-1}
		\frac{\big(w_{rk+l}+w_{r(k+n-2)+l}\big)u_{mk}}
		{w_{m(k+1)+s}\prod\limits_{j=0}^{n-2} w_{r(k+j)+l}};
		\end{gather}
	while if $m$ is even, then
	\begin{gather}\label{Th4_2}
		\sum_{k=1}^\infty 
		\frac{(-1)^k}{w_{mk+s}w_{m(k+1)+s}\prod\limits_{j=0}^{n-3}w_{r(k+j)+l}}
		= \frac{1}{u_m w_{m+s}}\sum_{k=1}^\infty (-1)^{k}
		\frac{\big(w_{rk+l}+w_{r(k+n-2)+l}\big)u_{mk}}
		{w_{m(k+1)+s}\prod\limits_{j=0}^{n-2} w_{r(k+j)+l}}.
		\end{gather}
\end{theorem}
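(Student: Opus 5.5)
The plan is to repeat the argument of Theorems~\ref{thm:main_transform} and \ref{thm:four_factors}, with the single weight $1/w_{rk+l}$ replaced by the block product
$$P_k=\prod_{j=0}^{n-2-1}w_{r(k+j)+l}=\prod_{j=0}^{n-3}w_{r(k+j)+l}.$$
I treat $m$ odd first. I would apply \eqref{Lemma1_1} (equivalently, \eqref{Lem1_infty} after the limit) with
$$x_k=\frac{(-1)^k}{P_k},\qquad y_k=\frac{(-1)^k}{w_{mk+s}w_{m(k+1)+s}}.$$
Then $x_ky_k$ is exactly the summand on the left of \eqref{Th4_1}.

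The first key step is the difference $x_k-x_{k+1}$. Since $P_k$ and $P_{k+1}$ share the factors $w_{r(k+j)+l}$ for $1\le j\le n-3$,
$$x_k-x_{k+1}=(-1)^k\Big(\frac{1}{P_k}+\frac{1}{P_{k+1}}\Big)=(-1)^k\frac{w_{rk+l}+w_{r(k+n-2)+l}}{\prod_{j=0}^{n-2}w_{r(k+j)+l}}.$$
This produces precisely the numerator and the enlarged product appearing in \eqref{Th4_1}.

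The second step is the inner sum $\sum_{j=1}^k y_j$. It is literally the same as in the proof of Theorem~\ref{thm:main_transform}, since it does not involve $r$, $l$ or $n$. By \eqref{Horadam} it telescopes for odd $m$ to
$$\frac{(-1)^s}{Eu_m}\Big(\frac{w_{m+s+1}}{w_{m+s}}-\frac{w_{m(k+1)+s+1}}{w_{m(k+1)+s}}\Big).$$
A second use of \eqref{Horadam}, with $t=m(k+1)+s$ and $q=m+s$, collapses this to
$$\frac{(-1)^m u_{mk}}{u_m w_{m+s}w_{m(k+1)+s}}=-\frac{u_{mk}}{u_m w_{m+s}w_{m(k+1)+s}}.$$
Multiplying by $x_k-x_{k+1}$ and summing over $k$ then gives the right-hand side of \eqref{Th4_1}, with the sign $(-1)^{k-1}$.

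For even $m$, I would instead take $y_k=1/(w_{mk+s}w_{m(k+1)+s})$, keeping the same $x_k$, in \eqref{Lemma1_1}. The left side then carries the factor $(-1)^k$. The inner sum now telescopes without the alternating sign and gives $+u_{mk}/(u_mw_{m+s}w_{m(k+1)+s})$, which yields \eqref{Th4_2}.

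The only genuinely delicate point is the boundary term $x_{n+1}\sum_{j\le n}y_j$ (here $n$ also denotes the truncation index, which I would rename to avoid a clash). I would argue that the partial sums of $y_j$ are bounded, since they equal the explicit telescoped expression above. Under the standing convergence hypothesis $w_k\to\infty$ in modulus, so $1/P_{N+1}\to0$ and the boundary term vanishes. Care is needed only with the sign bookkeeping $(-1)^{mk}$ versus $(-1)^k$, which is exactly where the parity of $m$ enters.
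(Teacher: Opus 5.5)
Your proposal is correct and is exactly the argument the paper intends; the paper omits this proof as following the lines of Theorems~1--3. You use summation by parts with $x_k=(-1)^k/\prod_{j=0}^{n-3}w_{r(k+j)+l}$, the identity $1/P_k+1/P_{k+1}=(w_{rk+l}+w_{r(k+n-2)+l})/\prod_{j=0}^{n-2}w_{r(k+j)+l}$, and the same Horadam-telescoped inner sum, and your sign bookkeeping checks out for both parities of $m$; your boundary-term argument (bounded partial sums of $y_j$ times $1/P_{N+1}\to0$, which tacitly needs $r\ge1$) is, if anything, more explicit than the paper's.
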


We now present several explicit identities for Fibonacci and Lucas numbers that follow from Theorem \ref{thm:n_factors} 
for cases $n=5$ and $n=6$: 
\begin{corollary}
	\begin{gather*}
		\sum_{k=1}^\infty \frac{1}{F_{k} F_{k+1} F_{k+2}F_{k+3}F_{k+4}} = \frac23\sum_{k=1}^\infty  \frac{(-1)^{k-1}}{F_{k+1} F_{k+3}F_{k+4}},\\
		\sum_{k=1}^\infty \frac{1}{L_{k} L_{k+1} L_{k+2}L_{k+3}L_{k+4}} = \frac27\sum_{k=1}^\infty (-1)^{k-1} \frac{F_k}{L_kL_{k+1} L_{k+3}L_{k+4}},\\
		\sum_{k=1}^\infty \frac{(-1)^k}{F_{2k} F_{2k+2}F_{2k+4}F_{2k+6}F_{2k+8}} = \frac2{21}\sum_{k=1}^\infty (-1)^{k} \frac{L_{2k+3}}{F_{2k+2} F_{2k+4}F_{2k+6}F_{2k+8}},\\
		\sum_{k=1}^\infty \frac{(-1)^k}{L_{2k} L_{2k+2}L_{2k+4}L_{2k+6}L_{2k+8}} 
		= \frac{10}{47}\sum_{k=1}^\infty  (-1)^{k} \frac{F_{2k}F_{2k+3}}{L_{2k}L_{2k+2} L_{2k+4}L_{2k+6}L_{2k+8}},
		\end{gather*}
	and
		\begin{gather*}
		\sum_{k=1}^\infty \frac{1}{F_{k} F_{k+1} F_{k+2}F_{k+3}F_{k+4}F_{k+5}} = \frac35\sum_{k=1}^\infty \frac{(-1)^{k-1}}{F_{k+1} F_{k+3}F_{k+4}F_{k+5}},\\
		\sum_{k=1}^\infty \frac{1}{L_{k} L_{k+1} L_{k+2}L_{k+3}L_{k+4}L_{k+5}} 
		= \frac3{11}\sum_{k=1}^\infty (-1)^{k-1} \frac{F_k}{L_kL_{k+1} L_{k+3}L_{k+4}L_{k+5}},\\
		\sum_{k=1}^\infty \frac{(-1)^k}{F_{2k} F_{2k+2}F_{2k+4}F_{2k+6}F_{2k+8}F_{2k+10}}
		= \frac7{55}\sum_{k=1}^\infty \frac{(-1)^{k}}{F_{2k+2} F_{2k+6}F_{2k+8}F_{2k+10}},\\
		\sum_{k=1}^\infty \frac{(-1)^k}{L_{2k} L_{2k+2}L_{2k+4}L_{2k+6}L_{2k+8} L_{2k+10}}
		= \frac{7}{123}\sum_{k=1}^\infty (-1)^{k} \frac{F_{2k}}{L_{2k}L_{2k+2} L_{2k+6}L_{2k+8}L_{2k+10}}.
		\end{gather*}
\end{corollary}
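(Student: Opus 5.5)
Each of the eight identities is a direct specialization of Theorem~\ref{thm:n_factors}. The plan is to split the consecutive (or step-two) product in the denominator so that its first $n-2$ factors form $\prod_{j=0}^{n-3}w_{r(k+j)+l}$ and its last two form $w_{mk+s}w_{m(k+1)+s}$. This forces the choice
\[
r=m,\qquad l=0,\qquad s=(n-2)m .
\]
Then $w_{r(k+j)+l}$ for $j=0,\dots,n-2$ together with $w_{m(k+1)+s}$ exhaust the shifted terms, with one duplication. The factor $u_{mk}$ in the numerator cancels $w_{rk}$ when $w=u$ (the Fibonacci case) and survives as $F_{mk}$ in the Lucas case. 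This explains the extra $F_k$ or $F_{2k}$ in the Lucas identities.

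For the Fibonacci and Lucas cases with $m=1$ I would use \eqref{Th4_1}; the step-two cases with $m=2$ use \eqref{Th4_2}. The prefactor is $1/(u_m w_{m+s})$, i.e.\ $1/F_{n-1}$ or $1/L_{n-1}$ when $m=1$, and $1/F_{2n-2}$ or $1/L_{2n-2}$ when $m=2$. The numerator $w_{rk}+w_{r(k+n-2)}$ is simplified with the standard identities
\[
F_{N+d}+(-1)^dF_{N-d}=L_dF_N,\qquad L_{N+d}+(-1)^dL_{N-d}=L_dL_N ,
\]
and their companions $F_{N+d}-(-1)^dF_{N-d}=F_dL_N$ and $L_{N+d}-(-1)^dL_{N-d}=5F_dF_N$, taken with $N$ the midpoint index.

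For instance, with $n=5$ and $m=1$: $F_k+F_{k+3}=2F_{k+2}$. The $F_{k+2}$ cancels one factor, and $F_k$ cancels against $u_k$, leaving $\tfrac23\sum(-1)^{k-1}/(F_{k+1}F_{k+3}F_{k+4})$. For $n=6$ and $m=1$: $F_k+F_{k+4}=3F_{k+2}$ with prefactor $1/F_5$, giving $3/5$. The Lucas versions are the same with $1/L_4=1/7$ and $1/L_5=1/11$, giving $2/7$ and $3/11$. Convergence is clear in all cases, since the terms decay geometrically.

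The main obstacle is the $m=2$ (even-index) cases. There the numerator $F_{2k}+F_{2k+2(n-2)}$ becomes $2L_{2k+3}$ for $n=5$ and $3F_{2k+4}$ for $n=6$ (and analogously for Lucas). The naive prefactor $1/(u_2F_{2n-2})$ then gives $1/4$ and $3/55$ rather than the displayed $2/21$ and $7/55$. So I would carefully recheck which split of the ten-index product was intended. This means trying the alternatives: $s=0$ with $l=4$, reversed roles of the pair, or factoring $L_{2k+3}$ and $F_{2k+4}$ through further Horadam relations \eqref{Horadam}. I would also confirm each constant numerically by comparing partial sums to several digits. If a discrepancy persists, the constant should be corrected to the one produced by \eqref{Th4_2}, since the derivation itself is mechanical once the parameters are fixed.
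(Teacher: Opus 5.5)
Your route is the paper's own: specialize Theorem~\ref{thm:n_factors}. Your parameters $r=m$, $l=0$, $s=(n-2)m$ are in fact the ones that reproduce the displayed left-hand sides. The triple $(r,s,l)=(1,0,0),(2,0,0)$ quoted in the paper's proof would put squared factors on the left. With your choice, the right-hand denominator $w_{m(k+1)+s}\prod_{j=0}^{n-2}w_{m(k+j)}$ is exactly $\prod_{j=0}^{n-1}w_{m(k+j)}$, with no repetition. Your four $m=1$ computations are correct.

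The genuine gap is in the $m=2$ cases. The ``discrepancy'' you report comes from arithmetic slips, not from the statement. Your fallback of ``correcting'' the constants to $1/4$ and $3/55$ would replace true identities by false ones.

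With your own parameters the prefactor is $1/(u_2w_{2n-2})$ with $u_2=F_2=1$. That is $1/F_8=1/21$ and $1/F_{10}=1/55$ for Fibonacci, and $1/L_8=1/47$ and $1/L_{10}=1/123$ for Lucas.
\begin{itemize}
\item For $n=5$ you correctly found $F_{2k}+F_{2k+6}=F_3L_{2k+3}=2L_{2k+3}$, and $u_{2k}=F_{2k}$ cancels the first denominator factor. The constant is therefore $2/21$, not $1/4$.
\item For $n=6$ the half-distance is $d=4$, so $F_{2k}+F_{2k+8}=L_4F_{2k+4}=7F_{2k+4}$. Your $3F_{2k+4}$ uses $L_2$, which belongs to the $m=1$ case. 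After cancelling $F_{2k+4}$ and $F_{2k}$, the constant is $7/55$.
\item The Lucas cases, which you did not work out, go the same way. $L_{2k}+L_{2k+6}=5F_3F_{2k+3}=10F_{2k+3}$ gives $10/47$, with $F_{2k}F_{2k+3}$ surviving in the numerator.
\item Likewise $L_{2k}+L_{2k+8}=L_4L_{2k+4}=7L_{2k+4}$ gives $7/123$ after cancelling $L_{2k+4}$.
\end{itemize}

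So no alternative split and no further use of \eqref{Horadam} is needed: \eqref{Th4_2} with your parameters yields all four $m=2$ constants exactly as displayed. A numerical check of the $2/21$ identity confirms this, with both sides $\approx -3.5827\cdot 10^{-5}$. Only the evaluation step needs fixing.
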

\begin{proof}
	The series follows by applying \eqref{Th4_1} with $m=1$ and \eqref{Th4_2} with $m=2$ choosing $(r,s,l)=(1,0,0)$ and $(2,0,0)$, respectively, for the Fibonacci and Lucas numbers.
\end{proof}

The identities from Theorem \ref{thm:n_factors} demonstrate that the transformation method extends in a natural way to higher-order reciprocal products, yielding expressions that involve only local combinations of consecutive terms and are often more amenable to further simplification.

\section{Applications to Classical Sequences and Combinatorial Identities}\label{Sect4}

The Abel-type transformations established in the previous section provide a unified framework for deriving a variety of identities involving reciprocal series. We begin by proving a general theorem from which numerous applications to classical sequences and combinatorial identities follow.
\begin{theorem}\label{yjr6p1s}
	Let $(y_k)$ be a sequence of complex numbers and let $Y_n=y_1+y_2+\cdots+ y_n$. Let $m$ and $r$ be non-negative integers. 
	
	If $m$ is odd, then
		\begin{equation}\label{wargel4}
		\sum_{k = 1}^\infty  \frac{y_k}{\prod\limits_{j = 0}^{2m - 1} {w_{k + r + j} } }  = v_m\sum_{k = 1}^\infty  \frac{Y_k
		}{\prod\limits_{j = 0}^{m - 1} {w_{k + r + j} } \prod\limits_{j = m + 1}^{2m} {w_{k + r + j}}};
		\end{equation}
	while if $m$ is even, then
		\begin{equation}\label{clghedx}
		\sum_{k = 1}^\infty  {\frac{{y_k }}{{\prod\limits_{j = 0}^{2m - 1} {w_{k + r + j} } }}}  = u_m\sum_{k = 1}^\infty  \frac{\big( pw_{k + m + r }  + 2w_{k + m + r - 1}  \big)Y_k
		}{{\prod\limits_{j = 0}^{2m} {w_{k + r + j} } }}  .
		\end{equation}
	
	In particular,
		\begin{equation*}
		\sum_{k = 1}^\infty  {\frac{{y_k }}{{w_{k + r} w_{k + r + 1} }}}  = p \sum_{k = 1}^\infty \frac{Y_k}{{w_{k + r} w_{k + r + 2} }} , 
		\end{equation*}
	and
		\begin{equation*}
		\sum_{k = 1}^\infty  {\frac{{y_k }}{{w_{k + r} w_{k + r + 1} w_{k + r + 2} w_{k + r + 3} }}}  = p \sum_{k = 1}^\infty  \frac{{\big( {pw_{k + r + 2}  + 2w_{k + r + 1} } \big)Y_k}}{w_{k + r} w_{k + r + 1} w_{k + r + 2}w_{k + r + 3}w_{k + r + 4}}.
		\end{equation*}
\end{theorem}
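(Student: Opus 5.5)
Apply \eqref{Lem1_infty} of Lemma~\ref{main_lem} with the given $(y_k)$ and
\[
x_k=\frac{1}{\prod_{j=0}^{2m-1} w_{k+r+j}} .
\]
Assuming, as elsewhere in the paper, enough convergence that $x_{n+1}Y_n\to0$, the left side equals $\sum_{k\ge1}(x_k-x_{k+1})Y_k$. The whole argument therefore comes down to computing the difference $x_k-x_{k+1}$ in closed form.

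The two products share the factors $w_{k+r+1},\dots,w_{k+r+2m-1}$. Hence
\[
x_k-x_{k+1}=\frac{w_{k+r+2m}-w_{k+r}}{\prod_{j=0}^{2m} w_{k+r+j}} .
\]
We need the standard identities $w_{n+m}-(-1)^m w_{n-m}=u_m\,(w_{n+1}+w_{n-1})$ and $w_{n+m}+(-1)^m w_{n-m}=v_m w_n$. Both follow from the Binet form, or from $w_{n+m}=u_m w_{n+1}+u_{m-1}w_n$ together with $u_{-m}=(-1)^{m-1}u_m$. Apply them with $n=k+r+m$.

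\textbf{Case $m$ odd.} Here $(-1)^m=-1$, so the numerator is $w_{n+m}-w_{n-m}=w_{n+m}+(-1)^m w_{n-m}=v_m w_{k+r+m}$. This cancels the factor $j=m$ of the full product and gives \eqref{wargel4}.

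\textbf{Case $m$ even.} Here the numerator is $w_{n+m}-w_{n-m}=u_m(w_{n+1}+w_{n-1})$. Using $w_{n+1}=pw_n+w_{n-1}$, this becomes $u_m(pw_{k+r+m}+2w_{k+r+m-1})$, which is exactly \eqref{clghedx}.

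The particular cases are $m=1$ (with $v_1=p$) and $m=2$ (with $u_2=p$).

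The only real step is verifying the two auxiliary identities for general initial values $a,b$, and keeping the signs straight. I would check them via Binet: write $w_n=A\alpha^n+B\beta^n$ with $\alpha\beta=-1$. Then
\[
w_{n+m}\pm(-1)^m w_{n-m}=A\alpha^n(\alpha^m\pm\beta^m)+B\beta^n(\beta^m\pm\alpha^m).
\]
With the $+$ sign this is $v_m w_n$. With the $-$ sign it is $(\alpha^m-\beta^m)(A\alpha^n-B\beta^n)$. Since $\alpha-\beta=\alpha+\alpha^{-1}$, we have $A\alpha^n-B\beta^n=(w_{n+1}+w_{n-1})/(\alpha-\beta)$, and hence $w_{n+m}-(-1)^m w_{n-m}=u_m(w_{n+1}+w_{n-1})$.

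The convergence hypothesis on the boundary term $x_{n+1}Y_n$ should be stated as an assumption, in the same spirit as the earlier theorems.
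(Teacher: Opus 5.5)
Your proposal is correct and is essentially the paper's own proof: substitute $x_k=\bigl(\prod_{j=0}^{2m-1}w_{k+r+j}\bigr)^{-1}$ into \eqref{Lem1_infty}, write $x_k-x_{k+1}=(w_{k+r+2m}-w_{k+r})/\prod_{j=0}^{2m}w_{k+r+j}$, and simplify the numerator using the identities for $w_{n+m}\pm(-1)^m w_{n-m}$ at $n=k+r+m$, according to the parity of $m$. Your denominator $\prod_{j=0}^{2m}$ in that difference is the correct one (the paper's display has upper limit $2m-1$, a typo), and you are right that the boundary condition $x_{n+1}Y_n\to 0$ must be assumed, which the paper leaves implicit.
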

\begin{proof} 
	Substituting in \eqref{Lem1_infty}
	$x_k = \big(\prod_{j = 0}^{2m - 1} {w_{k + r + j} }\big)^{-1}$
	and using 
		\begin{equation*}
		x_k  - x_{k + 1}  = (w_{k + r + 2m}  - w_{k + r})\Big(\prod\limits_{j = 0}^{2m - 1} {w_{k + r + j} } \Big)^{-1},
		\end{equation*}
	together with 
	$$w_{m + n} + (- 1)^n w_{m - n} = v_n w_m,$$ and $$w_{m + n} - (- 1)^n w_{m - n} = u_n ( pw_{m} + 2w_{m - 1}),$$
	we get \eqref{wargel4} and \eqref{clghedx}. 
\end{proof}
\begin{corollary}\label{zihreqe}
	Let $(y_n)$ be a sequence of complex numbers. Let $m$ and $r$ be non-negative integers. 
	
	If $m$ is odd, then
	\begin{equation*}
		\sum_{k = 1}^\infty  {\frac{{y_k  - y_{k - 1} }}{{\prod\limits_{j = 0}^{2m - 1} {w_{k + r + j} } }}}  =v_m \sum_{k = 1}^\infty  {\frac{y_k  - y_0 }{\prod\limits_{j = 0}^{m - 1} {w_{k + r + j} } \prod\limits_{j = m + 1}^{2m} w_{k + r + j} }}
		\end{equation*}
	and
		\begin{equation}\label{y+y}
		\sum_{k = 1}^{\infty}  \frac{( - 1)^{k - 1} ( y_k  + y_{k - 1})}{\prod\limits_{j = 0}^{2m - 1} w_{k + r + j}}  = v_m \sum_{k = 1}^{\infty}  \frac{{( - 1)^{k - 1} y_k  + y_0 }}{\prod\limits_{j = 0}^{m - 1} {w_{k + r + j} } \prod\limits_{j = m + 1}^{2m} {w_{k + r + j}}};
		\end{equation}
	while if $m$ is even, then
		\begin{equation*}
		\sum_{k = 1}^\infty  {\frac{{y_k  - y_{k - 1} }}{\prod\limits_{j = 0}^{2m - 1} {w_{k + r + j} } }}  = u_m\sum_{k = 1}^\infty  \frac{{( {w_{k + m + r - 1}  + w_{k + m + r + 1} })( y_k  - y_0 )}}{\prod\limits_{j = 0}^{2m} w_{k + r + j}  }
		\end{equation*}
		and
		\begin{equation*}
		\sum_{k = 1}^\infty  {(-1)^{k-1}\frac{{y_k  + y_{k - 1} }}{\prod\limits_{j = 0}^{2m - 1} {w_{k + r + j} } }}  = u_m \sum_{k = 1}^\infty  {\frac{{ ( {w_{k + m + r - 1}  + w_{k + m + r + 1} } )( {( - 1)^{k - 1}y_k  + y_0 } )}}{{\prod\limits_{j = 0}^{2m} {w_{k + r + j} } }}} .
		\end{equation*}
\end{corollary}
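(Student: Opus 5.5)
The plan is to apply Theorem~\ref{yjr6p1s} directly, choosing a new sequence in place of $(y_k)$ so that its partial sums $Y_k$ are the simple expressions on the right-hand sides. All four identities then follow by substitution.

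\emph{Differences.} Replace $y_k$ in Theorem~\ref{yjr6p1s} by $z_k=y_k-y_{k-1}$. The partial sums telescope:
\[
Z_k=\sum_{j=1}^k (y_j-y_{j-1})=y_k-y_0 .
\]
For odd $m$, \eqref{wargel4} immediately gives the first displayed identity.

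\emph{Alternating sums.} Replace $y_k$ by $z_k=(-1)^{k-1}(y_k+y_{k-1})$. The partial sums again collapse:
\[
Z_k=\sum_{j=1}^k\bigl((-1)^{j-1}y_j-(-1)^{j-2}y_{j-1}\bigr)=(-1)^{k-1}y_k-(-1)^{-1}y_0=(-1)^{k-1}y_k+y_0 .
\]
Inserting this into \eqref{wargel4} yields \eqref{y+y}.

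\emph{Even $m$.} Use \eqref{clghedx} with the same two choices of $z_k$. In addition, rewrite the numerator factor using the recurrence $pw_n=w_{n+1}-w_{n-1}$ at $n=k+m+r$:
\[
pw_{k+m+r}+2w_{k+m+r-1}=w_{k+m+r+1}+w_{k+m+r-1}.
\]
This matches the stated form of both even-$m$ identities.

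The only real subtlety is the limiting hypothesis behind \eqref{Lem1_infty}, which the proof of Theorem~\ref{yjr6p1s} uses implicitly. It requires $x_{n+1}Z_n\to 0$, that is, $(y_n-y_0)/\prod_j w_{n+1+r+j}\to 0$ in the first case and the analogous condition in the second. I would state this as the tacit convergence assumption, in the same spirit as the "all series converge" hypotheses of the earlier theorems. Beyond that, the verification of the index bookkeeping in $Z_k$ (especially the sign $(-1)^{-1}$ giving $+y_0$) is the step to check carefully; everything else is routine substitution.
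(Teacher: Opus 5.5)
Your proof is correct and is essentially the paper's argument: the paper also just substitutes into Theorem~\ref{yjr6p1s}. Its printed hint ``$y_k=y^k$'' is evidently a garbled form of your substitutions $y_k\mapsto y_k-y_{k-1}$ and $y_k\mapsto(-1)^{k-1}(y_k+y_{k-1})$, whose partial sums telescope. For even $m$ the numerator is rewritten via $pw_n+2w_{n-1}=w_{n+1}+w_{n-1}$, exactly as you do, and your computation of $Z_k$, including the sign that produces $+y_0$, checks out.
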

\begin{proof}
	Use $y_k=y^k$ in Theorem \ref{yjr6p1s}.
\end{proof}
\begin{proposition}
	Let $m$, $r$, and $s$ be non-negative integers. 
	
	If $m$ is odd, then
		\begin{equation}
		\sum_{k = 1}^\infty  {\frac{{(p-1)w_{k + s+1} + w_{k+s} }}{{\prod\limits_{j = 0}^{2m - 1} {w_{k + r + j} } }}}  = v_m \sum_{k = 1}^\infty  {\frac{{ {w_{k + s + 2}  - w_{s + 2} } }}{{\prod\limits_{j = 0}^{m - 1} {w_{k + r + j} } \prod\limits_{j = m + 1}^{2m} {w_{k + r + j} } }}} ;
		\end{equation}
	while if $m$ is even, then
		\begin{equation*}
		\sum_{k = 1}^\infty  {\frac{{(p-1)w_{k + s+1}+w_{k+s}}}{{\prod\limits_{j = 0}^{2m - 1} {w_{k + r + j} } }}}  = u_m\sum_{k = 1}^\infty  {\frac{{ \left( {p w_{k + m + r}  + 2 w_{k + m + r - 1} } \right)\left( {w_{k + s + 2}  - w_{s + 2} } \right)}}{\prod\limits_{j = 0}^{2m} w_{k + r + j} }} .
		\end{equation*}
	
	In particular,
		\begin{equation*}
		\sum_{k = 1}^\infty  {\frac{{F_{k + s} }}{{F_{k + r} F_{k + r + 1} }}}  = \sum_{k = 1}^\infty  {\frac{{F_{k + s + 2}  - F_{s + 2} }}{{F_{k + r} F_{k + r + 2} }}}  
		\end{equation*}
	and
		\begin{equation*}
		\sum_{k = 1}^\infty  \frac{{F_{k + s} }}{{F_{k + r} F_{k + r + 1} F_{k + r + 2} F_{k + r + 3} }}  = \sum_{k = 1}^\infty  \frac{ L_{k + r + 2}( {F_{k + s + 2}  - F_{s + 2} } )}{F_{k + r} F_{k + r + 1} F_{k + r + 2}F_{k + r + 3}F_{k + r + 4}} .
		\end{equation*}
\end{proposition}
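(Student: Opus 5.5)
This is a direct application of Theorem \ref{yjr6p1s} with the choice $y_k=(p-1)w_{k+s+1}+w_{k+s}$. The plan is to compute the partial sums $Y_k$ in closed form, show they equal $w_{k+s+2}-w_{s+2}$, and then substitute into \eqref{wargel4} and \eqref{clghedx}.

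The key step is the telescoping of $y_j$. From the recurrence, $w_{j+s+2}-w_{j+s+1}=(p-1)w_{j+s+1}+w_{j+s}$, so
\begin{equation*}
y_j = w_{j+s+2}-w_{j+s+1}.
\end{equation*}
Summing over $j=1,\dots,k$ gives
\begin{equation*}
Y_k=w_{k+s+2}-w_{s+2}.
\end{equation*}
Inserting this $Y_k$ into \eqref{wargel4} for odd $m$ and into \eqref{clghedx} for even $m$ yields the two displayed general identities verbatim.

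For the special cases, specialize to the Fibonacci numbers, $p=1$, where $y_k=F_{k+s}$.
\begin{itemize}
\item With $m=1$ we have $v_1=p=1$. The products reduce to $F_{k+r}F_{k+r+1}$ on the left and $F_{k+r}F_{k+r+2}$ on the right, which gives the first particular identity.
\item With $m=2$ we have $u_2=p=1$. The factor becomes $pF_{k+r+2}+2F_{k+r+1}=F_{k+r+2}+2F_{k+r+1}$. Using $F_{n+1}+F_{n-1}=L_n$, this is $F_{k+r+3}+F_{k+r+1}=L_{k+r+2}$, which gives the second particular identity.
\end{itemize}

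The only delicate point is the convergence hypothesis implicit in Theorem \ref{yjr6p1s}, which rests on \eqref{Lem1_infty}. One must check that
\begin{equation*}
x_{n+1}Y_n=\frac{w_{n+s+2}-w_{s+2}}{\prod_{j=0}^{2m-1}w_{n+1+r+j}}\to 0.
\end{equation*}
This holds whenever $w_n$ grows geometrically, since the denominator has $2m\ge 2$ factors while the numerator grows like a single term. As in the rest of the paper, I would state this under the standing assumption that all series converge (for example $|p|$ real positive with a nondegenerate $w$). I expect no genuine obstacle beyond spotting the telescoping form of $y_j$.
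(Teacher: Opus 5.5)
Your proof is correct and is essentially the paper's argument. The paper applies Corollary~\ref{zihreqe} with $y_k=w_{k+s+2}$, which is just Theorem~\ref{yjr6p1s} applied to the difference $w_{k+s+2}-w_{k+s+1}=(p-1)w_{k+s+1}+w_{k+s}$ with partial sums $Y_k=w_{k+s+2}-w_{s+2}$, so it is exactly the telescoping you carry out by hand (going directly through the theorem also saves you from converting the corollary's even-case factor $w_{k+m+r-1}+w_{k+m+r+1}$ into $pw_{k+m+r}+2w_{k+m+r-1}$).
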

\begin{proof}
	Use $y_k=w_{k+s+2}$ in Corollary \ref{zihreqe}.
\end{proof}
\begin{proposition}
	If $m$ is an odd integer and $r$ is an integer, then
	\begin{equation*}
		\sum_{k = 1}^\infty  {\frac{{( - 1)^{k - 1} }}{{\prod\limits_{j = 0}^{2m - 1} {w_{k + r + j} } }}}  = v_m \sum_{k = 1}^\infty\frac{{1 }}{{\prod\limits_{j = 0}^{m - 1} {w_{2k  + r + j - 1} } \prod\limits_{j = m + 1}^{2m} {w_{2k  + r + j - 1} } }}.
		\end{equation*}
\end{proposition}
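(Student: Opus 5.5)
The plan is to apply the odd-$m$ identity \eqref{wargel4} of Theorem~\ref{yjr6p1s} with the sequence $y_k=(-1)^{k-1}$. Its partial sums are
\begin{equation*}
Y_k=\sum_{j=1}^k(-1)^{j-1}=\frac{1-(-1)^k}{2},
\end{equation*}
so $Y_k=1$ for odd $k$ and $Y_k=0$ for even $k$.

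With this choice, the left-hand side of \eqref{wargel4} is exactly the alternating series in the statement. On the right-hand side, all terms with even $k$ vanish. Writing the surviving odd indices as $k=2i-1$ with $i\ge1$, the right-hand side becomes
\begin{equation*}
v_m\sum_{i=1}^\infty\frac{1}{\prod_{j=0}^{m-1}w_{2i-1+r+j}\prod_{j=m+1}^{2m}w_{2i-1+r+j}}.
\end{equation*}
After renaming $i$ as $k$, this is the claimed right-hand side.

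Equivalently, one could use \eqref{y+y} from Corollary~\ref{zihreqe} with $y_0=0$ and $y_k=(1-(-1)^k)/2$. Then $y_k+y_{k-1}=1$ and $(-1)^{k-1}y_k+y_0=y_k$, which gives the same result. However, that corollary's stated proof ("$y_k=y^k$") does not obviously cover this choice of $y_k$, so I will go directly through Theorem~\ref{yjr6p1s}.

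The only point needing care, and the main obstacle, is justifying the use of \eqref{Lem1_infty}. This requires
\begin{equation*}
x_{n+1}Y_n\to0, \qquad x_k=\Bigl(\prod_{j=0}^{2m-1}w_{k+r+j}\Bigr)^{-1}.
\end{equation*}
Since $|Y_n|\le1$, it suffices that $x_{n}\to0$. This holds under the standing assumption that the series converge, because convergence of $\sum (-1)^{k-1}x_k$ forces its terms to tend to zero. It also holds whenever $|w_n|\to\infty$, which is the generic case.

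One should also note that $r$ may be any integer, provided no $w_{k+r+j}$ in the denominators vanishes. The derivation of Theorem~\ref{yjr6p1s} only uses the identity $w_{n+m}+(-1)^m w_{n-m}=v_m w_n$ with $m$ odd, and this identity holds for all integer indices.
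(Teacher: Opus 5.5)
Your proof is correct and is essentially the paper's argument. The paper applies Theorem~\ref{yjr6p1s} with $y_k=(-1)^k$, which negates both sides relative to your choice $y_k=(-1)^{k-1}$, and then keeps only the odd-$k$ terms where $Y_k\neq0$. Your check of the boundary condition $x_{n+1}Y_n\to0$ and your remark on integer $r$ supply details the paper leaves implicit.
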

\begin{proof}	Use $y_k=(-1)^k$ in Theorem \ref{yjr6p1s}.
\end{proof}
\begin{proposition}
	If $m$ is an odd integer and $s$ is an integer, then
		\begin{equation*}
		\sum_{k = 1}^\infty  \frac{{w_{k + s}^2 }}{\prod\limits_{j = 0}^{2m - 1} w_{k + r + j}}  =\frac{v_m}{p}  \sum_{k = 1}^\infty\frac{{ {w_{k + s + 1} w_{k + s}  - w_{s + 1} w_s } }}{\prod\limits_{j = 0}^{m - 1} {w_{k + r + j} } \prod\limits_{j = m + 1}^{2m} {w_{k + r + j} }}.
		\end{equation*}
\end{proposition}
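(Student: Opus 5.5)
The plan is to apply Theorem~\ref{yjr6p1s} in its odd case \eqref{wargel4}, taking $y_k=w_{k+s}^2$. Once the partial sums $Y_k=\sum_{j=1}^k w_{j+s}^2$ are known in closed form, the identity follows by substitution. So the whole proof reduces to evaluating $Y_k$, and I expect this to telescope.

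\textbf{Step 1: a telescoping identity for $w_n^2$.} From the recurrence $w_{n+1}=pw_n+w_{n-1}$ we get $w_{n+1}-w_{n-1}=pw_n$. Multiplying by $w_n$ gives
\begin{equation*}
w_{n+1}w_n - w_n w_{n-1} = p\,w_n^2 .
\end{equation*}
This holds for every integer $n$, since the sequence $(w_n)$ extends to negative indices through $w_{n-2}=w_n-pw_{n-1}$. This extension matters because $s$ is only assumed to be an integer.

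\textbf{Step 2: summing.} Put $n=j+s$ and sum over $j=1,\dots,k$. The left-hand side telescopes, giving
\begin{equation*}
p\,Y_k = p\sum_{j=1}^k w_{j+s}^2 = w_{k+s+1}w_{k+s}-w_{s+1}w_s .
\end{equation*}
Assuming $p\neq 0$, which the statement implicitly requires because $v_m/p$ appears, this yields $Y_k=(w_{k+s+1}w_{k+s}-w_{s+1}w_s)/p$.

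\textbf{Step 3: substitution.} Insert this $Y_k$ into \eqref{wargel4} with $m$ odd. The factor $v_m$ combines with $1/p$ to give exactly the stated right-hand side.

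\textbf{Main obstacle: justifying the use of \eqref{Lem1_infty}.} Theorem~\ref{yjr6p1s} rests on \eqref{Lem1_infty}, which needs $x_{n+1}Y_n\to 0$, where $x_n=1/\prod_{j=0}^{2m-1}w_{n+r+j}$. Here $Y_n$ grows like $\alpha^{2n}$ and $x_{n+1}$ decays like $\alpha^{-2mn}$, with $\alpha$ the dominant root of $x^2-px-1$. So for $m\ge 1$ and $|\alpha|>1$ the boundary term tends to zero. Under the paper's standing assumption that all the series involved converge, I will simply note this condition rather than prove it in full generality.
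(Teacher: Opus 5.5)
Your argument is correct and is essentially the paper's proof. The paper applies Corollary~\ref{zihreqe} (odd case) with $y_k=w_{k+s+1}w_{k+s}$, whose differences $y_k-y_{k-1}$ equal $p\,w_{k+s}^2$ by exactly your Step~1, so it is the same telescoping evaluation of $Y_k$ fed into \eqref{wargel4}.

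One small slip in your boundary remark: $x_{n+1}Y_n$ behaves like $\alpha^{-2(m-1)n}$, so it tends to zero only for $m\ge 3$. At $m=1$ it tends to a nonzero constant, and both series diverge, so the convergence hypothesis is genuinely what rules that case out.
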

\begin{proof}
	Use $y_k=w_{k+s+1}w_{k+s}$ in Corollary \ref{zihreqe}.
\end{proof}
\begin{proposition}
	If $m$ is an odd integer and $s\ne0$, $r$ are integers, then
	\begin{equation*}
		\sum_{k = 1}^\infty  \frac{(-1)^{s2^{k-1}}}{F_{s2^k}\prod\limits_{j = 0}^{2m - 1} F_{k + r + j}}  = \frac{(-1)^sL_m}{F_s}  \sum_{k = 1}^\infty \frac{{ F_{s(2^k-1)}}}{F_{s2^k}\prod\limits_{j = 0}^{m - 1} {F_{k + r + j} } \prod\limits_{j = m + 1}^{2m} {F_{k + r + j} }}.
		\end{equation*}
\end{proposition}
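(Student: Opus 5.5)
The plan is to specialize Theorem~\ref{yjr6p1s}, formula \eqref{wargel4}, to the Fibonacci case $w_n=F_n$. Then $p=1$ and $v_m=L_m$. The sequence $y_k$ should be chosen so that its partial sums $Y_k$ telescope to the closed form $F_{s(2^k-1)}/F_{s2^k}$ (up to a constant factor). Concretely, I would set
\begin{equation*}
a_k=\frac{F_{s(2^k-1)}}{F_{s2^k}},\qquad k\ge 0,
\end{equation*}
so that $a_0=F_0/F_s=0$ (this is where $s\neq 0$ is needed). I would then take $y_k=c\,(a_k-a_{k-1})$ for a suitable constant $c$. This gives $Y_k=c\,a_k$ automatically, and \eqref{wargel4} yields the right-hand side directly, with constant $c\,L_m$.

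The key step is computing $a_k-a_{k-1}$. Put $A=s2^{k-1}$, so $s2^k=2A$ and $s(2^k-1)=2A-s$. Then
\begin{equation*}
a_k-a_{k-1}=\frac{F_{2A-s}F_{A}-F_{A-s}F_{2A}}{F_{2A}F_{A}}.
\end{equation*}
The numerator has the shape $F_xF_y-F_{x-t}F_{y+t}$ with $x=2A-s$, $y=A$, $t=A$. I would evaluate it with Vajda's/d'Ocagne's identity, which is the Fibonacci case of \eqref{Horadam} (for $F$ one has $E=1$). This should give a signed multiple $\pm F_A F_{s}$, after using $F_{-s}=(-1)^{s+1}F_s$.

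The factor $F_A$ then cancels against the denominator, leaving
\begin{equation*}
a_k-a_{k-1}=\epsilon_k\,\frac{F_s}{F_{s2^k}},
\end{equation*}
where $\epsilon_k$ is a sign that should turn out to be $(-1)^{s}(-1)^{s2^{k-1}}$. Choosing $c=(-1)^s/F_s$ then gives
\begin{equation*}
y_k=\frac{(-1)^{s2^{k-1}}}{F_{s2^k}},\qquad Y_k=\frac{(-1)^s}{F_s}\,\frac{F_{s(2^k-1)}}{F_{s2^k}}.
\end{equation*}
Substituting these into \eqref{wargel4} produces the stated identity, including the prefactor $(-1)^sL_m/F_s$.

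The main obstacle is bookkeeping of signs in the Vajda-type identity: exactly which power of $-1$ appears, and checking that it combines into $(-1)^s(-1)^{s2^{k-1}}$ for all $k\ge1$. Note that $A=s2^{k-1}$ is even for $k\ge2$ but equals $s$ when $k=1$. I would first verify the identity numerically for $k=1,2$ and small $s$, for instance $s=1,2$, to pin down the sign convention, and then prove the general case by direct substitution into \eqref{Horadam}.

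Convergence is not an issue, since the $Y_k$ are bounded and the denominators grow exponentially. In particular the boundary condition in \eqref{Lem1_infty}, on which Theorem~\ref{yjr6p1s} rests, holds.
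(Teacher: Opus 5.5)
Your proposal is correct and follows the paper's proof. You specialize \eqref{wargel4} to $w_n=F_n$ with $y_k=(-1)^{s2^{k-1}}/F_{s2^k}$; the only difference is that you derive $Y_k=(-1)^sF_{s(2^k-1)}/(F_sF_{s2^k})$ by telescoping, while the paper cites it from Vajda (Eq.~(89)). One small correction: \eqref{Horadam} as stated is only the shift-one (d'Ocagne) case, so for your numerator you need the general Vajda identity $F_{n+i}F_{n+j}-F_nF_{n+i+j}=(-1)^nF_iF_j$ with $n=A-s$, $i=A$, $j=s$. This gives $a_k-a_{k-1}=(-1)^{A-s}F_s/F_{s2^k}$ and confirms your sign $\epsilon_k=(-1)^s(-1)^{s2^{k-1}}$.
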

\begin{proof}
	Substitute  $y_k={(-1)^{s2^{k-1}}}/F_{s2^k}$ in Theorem \ref{yjr6p1s} and use the formula 
		\begin{equation*}
		\sum_{j=1}^{n}\frac{(-1)^{s2^{j-1}}}{F_{s2^j}} = \frac{(-1)^sF_{s(2^n-1)}}{F_sF_{s2^n}},
		\end{equation*}
	which ones can find in \cite[Eq. (89)]{Vajda}.
\end{proof}

Let $H_n=\sum\limits_{i=1}^{n}\frac{1}{i}$ be the $n$th harmonic number.
\begin{proposition}
	If $r$ is a non-negative integer and $m$ is an odd integer, then
		\begin{equation*}
		\sum_{k = 1}^\infty  \frac{{H_k }}{{\prod\limits_{j = 0}^{2m - 1} {w_{k + r + j} } }}  = v_m \sum_{k = 1}^\infty  \frac{(k + 1 )H_k  - k}{\prod\limits_{j = 0}^{m - 1} {w_{2k - 1 + r + j}} \prod\limits_{j = m + 1}^{2m} {w_{2k - 1 + r + j} } } .
		\end{equation*}
\end{proposition}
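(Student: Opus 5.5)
The plan is to specialize Theorem~\ref{yjr6p1s} in the odd case \eqref{wargel4}, as in the two preceding propositions. The printed right-hand side runs over the shifted indices $2k-1+r+j$. In the proposition with $(-1)^{k-1}$, exactly this indexing arose because the partial sums $Y_k$ vanished for even $k$. So only the odd indices $k=2i-1$ survived, and reindexing produced $w_{2i-1+r+j}$. I intend to reproduce that mechanism.

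First, I compute the partial sums of the harmonic numbers. The standard identity $\sum_{j=1}^{n}H_j=(n+1)H_n-n$ follows by interchanging summation in $\sum_{j\le n}\sum_{i\le j}1/i=\sum_{i\le n}(n+1-i)/i$. This gives exactly the printed numerator $(k+1)H_k-k$.

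Second, I choose an auxiliary sequence $(y_k)$ with $Y_{2i}=0$ and $Y_{2i-1}=(i+1)H_i-i$. Concretely, $y_{2i-1}=(i+1)H_i-i$ and $y_{2i}=-\bigl((i+1)H_i-i\bigr)$. Inserting this into \eqref{wargel4}, the even terms on the right drop out. Putting $k=2i-1$ turns the right-hand side into
\[
v_m\sum_{i\ge1}\frac{(i+1)H_i-i}{\prod_{j=0}^{m-1}w_{2i-1+r+j}\prod_{j=m+1}^{2m}w_{2i-1+r+j}},
\]
which is the printed right-hand side. Convergence and the vanishing of the boundary term $x_{n+1}Y_n$ in Lemma~\ref{main_lem} are immediate: $Y_n=O(n\log n)$ while $x_{n+1}$ decays geometrically whenever the $w_n$ grow exponentially. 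This is the standing convergence assumption.

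The third step is the main obstacle. It is to identify the left-hand side $\sum_k y_k/\prod_{j=0}^{2m-1}w_{k+r+j}$ for this $y_k$ with $\sum_k H_k/\prod_{j=0}^{2m-1}w_{k+r+j}$. These are not termwise equal. I would first pair the terms $k=2i-1$ and $k=2i$ and try to use $x_{2i-1}-x_{2i}$ together with the identity $w_{n+m}+(-1)^m w_{n-m}=v_m w_n$, as in the proof of Theorem~\ref{yjr6p1s}, to convert the paired sum into $\sum H_k x_k$. Before investing in this, I would check the claimed identity numerically for $m=1$, $r=0$ and $w=F$. The natural direct route, taking $y_k=H_k$ in \eqref{wargel4}, yields the same numerator $(k+1)H_k-k$ but with denominators indexed by $k+r+j$. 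So if the numerical check fails, the printed $2k-1$ indexing cannot be proved as stated, and the step in this paragraph is exactly where that would show up.
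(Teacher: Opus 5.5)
\textbf{Verdict: genuine gap.} Your third step cannot be carried out, because the identity as printed, with subscripts $2k-1+r+j$, is false. The numerical check you propose settles this. Take $m=1$, $r=0$, $w=F$, so $v_1=1$; all terms are positive. The left side $\sum_{k\ge1}H_k/(F_kF_{k+1})$ already exceeds $1+\tfrac34=1.75$ from its first two terms, and equals $2.29089\ldots$. The printed right side is
\[
\sum_{k\ge1}\frac{(k+1)H_k-k}{F_{2k-1}F_{2k+1}}=\frac12+\frac{5/2}{10}+\frac{13/3}{65}+\frac{77/12}{442}+\cdots\approx 0.8347 .
\]
By contrast, $\sum_{k\ge1}\bigl((k+1)H_k-k\bigr)/(F_kF_{k+2})=2.29089\ldots$ agrees with the left side. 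So the $2k-1$ is a slip copied from the preceding $(-1)^{k-1}$ proposition. The paper's proof is exactly the ``direct route'' you set aside:
\begin{itemize}
\item take $y_k=H_k$ in \eqref{wargel4};
\item use $Y_k=\sum_{j\le k}H_j=(k+1)H_k-k$, by your interchange-of-summation argument;
\item note that $x_{n+1}Y_n\to0$, as you observed.
\end{itemize}
This proves the statement with $w_{k+r+j}$ in both products on the right-hand side, which is what should be proved.

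Your second step should be dropped as well. With $Y_{2i}=0$, Lemma~\ref{main_lem} gives $\sum_k y_kx_k=\sum_i Y_{2i-1}(x_{2i-1}-x_{2i})$, which is the printed right side by construction. So the auxiliary sequence says nothing about $\sum_k H_kx_k$, and all the content is pushed into the false third step. No pairing of terms, and no use of $w_{n+m}+(-1)^m w_{n-m}=v_mw_n$, can bridge $2.29$ and $0.83$.

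The reindexing $k\mapsto 2k-1$ was legitimate in the $(-1)^{k-1}$ proposition only because the sequence on the left there, $(-1)^{k-1}$, has partial sums vanishing at every even $k$. Here the left side forces $y_k=H_k$, whose partial sums never vanish, so no terms drop out.

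As it stands, your proposal proves neither the printed identity (which is impossible) nor the corrected one (which you mention but do not commit to). Committing to $y_k=H_k$ and correcting the indexing to $w_{k+r+j}$ closes the gap.
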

\begin{proof}
	Use $y_k=H_k$ in Theorem \ref{yjr6p1s} together with
	$\sum_{j=1}^kH_j=(k+1)H_k-k$.
\end{proof}

In the next proposition, we use the Catalan numbers, defined by 
$C_j=\frac{1}{j+1}\binom{2j}{j}$, for nonnegative $j$.
\begin{proposition}
	If $q$ and $s$ are complex numbers and $m$ is an odd integer, then
		\begin{equation}\label{Prop6_1}
		\sum_{k = 1}^\infty  {\frac{{\binom{{k + q}}s}}{{\prod\limits_{j = 0}^{2m - 1} {w_{k + r + j} } }}}  = v_m \sum_{k = 1}^\infty  {\frac{\binom{k + q + 1}{s + 1} - \binom{q + 1}{s + 1}}{{\prod\limits_{j = 0}^{m - 1} {w_{k + r + j} } \prod\limits_{j = m + 1}^{2m} {w_{k + r + j} } }}} 
		\end{equation}
	and
		\begin{equation}\label{Prop6_2}
		\sum_{k = 1}^\infty  {\frac{{( - 1)^{k - 1} \binom{q}{k + s}}}{\prod\limits_{j = 0}^{2m - 1} w_{k + r + j}}}  = v_m \sum_{k = 1}^\infty  \frac{{ {( - 1)^{k - 1} \binom{{q - 1}}{{k + s}} + \binom{{q - 1}}{s}}}}{{\prod\limits_{j = 0}^{m - 1} {w_{k + r + j} } \prod\limits_{j = m + 1}^{2m} {w_{k + r + j} } }}  .
		\end{equation}
	
	In particular, if $m$ is an odd integer, then
		\begin{gather*}
		\sum_{k = 1}^\infty  {\frac{{2^{2k} }}{{\binom{{2k}}{k}\prod\limits_{j = 0}^{2m - 1} {w_{k + r + j} } }}}  = \frac{2v_m}{3} \sum_{k = 1}^\infty  {\frac{{2^{2k}  - C_k }}{{C_k \prod\limits_{j = 0}^{m - 1} {w_{k + r + j} } \prod\limits_{j = m + 1}^{2m} {w_{k + r + j} } }}},\\ 
		\sum_{k = 1}^\infty  {\frac{{\binom{2k}{k}}}{{2^{2k} \prod\limits_{j = 0}^{2m - 1} {w_{k + r + j} } }}}  = v_m \sum_{k = 1}^\infty  {\frac{{(k+1)( 2k + 1 )2^{-2k}C_k - 1 }}{{ \prod\limits_{j = 0}^{m - 1} {w_{k + r + j} } \prod\limits_{j = m + 1}^{2m} {w_{k + r + j} } }}} .
		\end{gather*}
\end{proposition}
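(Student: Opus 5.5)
The plan is to apply Theorem~\ref{yjr6p1s} (the odd-$m$ case \eqref{wargel4}) and Corollary~\ref{zihreqe} (specifically \eqref{y+y}) with suitable binomial choices of $y_k$. In each case the partial sums $Y_k$, or the combinations $y_k\pm y_{k-1}$, are supplied by standard binomial identities.

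\textbf{First identity \eqref{Prop6_1}.} Take $y_k=\binom{k+q}{s}$ in \eqref{wargel4}. The partial sums telescope via Pascal's rule $\binom{j+q+1}{s+1}-\binom{j+q}{s+1}=\binom{j+q}{s}$. Hence
\[
Y_k=\sum_{j=1}^k\binom{j+q}{s}=\binom{k+q+1}{s+1}-\binom{q+1}{s+1}.
\]
Pascal's rule holds for complex upper argument and complex lower argument, if binomials are interpreted via Gamma functions. Equivalently, one can use the first formula of Corollary~\ref{zihreqe} with $y_k=\binom{k+q+1}{s+1}$.

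\textbf{Second identity \eqref{Prop6_2}.} Use \eqref{y+y} with $y_k=(-1)^{k}\binom{q-1}{k+s}$, up to a global sign to be fixed. Pascal's rule gives
\[
\binom{q-1}{k+s}+\binom{q-1}{k+s-1}=\binom{q}{k+s}.
\]
So, with $y_k=\binom{q-1}{k+s}$ substituted in the alternating form, the left side of \eqref{y+y} becomes $\sum (-1)^{k-1}\binom{q}{k+s}/\prod w$. The right side then contains $(-1)^{k-1}\binom{q-1}{k+s}+\binom{q-1}{s}$, since $y_0=\binom{q-1}{s}$. This matches \eqref{Prop6_2} directly, so one only needs to check the sign convention of \eqref{y+y} as stated.

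\textbf{Special cases.} These require choosing complex $q,s$ so that the binomials become central binomial ratios.

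For the first, take $y_k=2^{2k}/\binom{2k}{k}$. Then
\[
\frac{2^{2k}}{\binom{2k}{k}}=\frac{\Gamma(k+1)\sqrt\pi}{\Gamma(k+\tfrac12)}=\binom{k-\tfrac12}{k}^{-1},
\]
which is awkward as a single binomial in $\binom{k+q}{s}$ form. Rather than forcing the form, I would verify directly by induction the partial sum
\[
\sum_{j=1}^k \frac{2^{2j}}{\binom{2j}{j}}=\frac{2}{3}\Bigl(\frac{(2k+1)2^{2k}}{\binom{2k}{k}}-1\Bigr).
\]
I would then rewrite the right side in terms of $C_k$ using $\binom{2k}{k}=(k+1)C_k$. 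The formula must be reconciled with the displayed $\frac{2}{3}\,(2^{2k}-C_k)/C_k$. The precise bookkeeping may differ by how $(k+1)$ factors are absorbed, and this consistency check is the main fiddly step.

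For the second, $\binom{2k}{k}2^{-2k}=\binom{k-1/2}{k}$, which is of the form $\binom{k+q}{s}$ only with $s$ depending on $k$. So again I would use the known partial sum
\[
\sum_{j=0}^k\binom{2j}{j}4^{-j}=(2k+1)\binom{2k}{k}4^{-k},
\]
which is proved by induction. Subtracting the $j=0$ term gives
\[
Y_k=(2k+1)(k+1)C_k2^{-2k}-1,
\]
matching the stated right side. Substituting these into \eqref{wargel4} gives the two special cases. Convergence is inherited from the growth of $\prod w$, which is exponential in $m$, against the at most polynomial growth of $Y_k$.
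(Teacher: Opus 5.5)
Your treatment of the two general identities is the paper's own argument: Pascal's rule fed into Theorem~\ref{yjr6p1s}/Corollary~\ref{zihreqe}, with $y_k=\binom{k+q}{s}$ for \eqref{Prop6_1} and $y_k=\binom{q-1}{k+s}$ in \eqref{y+y} for \eqref{Prop6_2}. Both are correct as you state them. The sign convention of \eqref{y+y} needs no adjustment, since the partial sums of $(-1)^{j-1}(y_j+y_{j-1})$ are exactly $(-1)^{k-1}y_k+y_0$. Your second special case is also correct, because $\sum_{j=0}^k\binom{2j}{j}4^{-j}=(2k+1)\binom{2k}{k}4^{-k}$ holds.

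The gap is the first special case. The partial sum you plan to prove by induction,
\[
\sum_{j=1}^k \frac{2^{2j}}{\binom{2j}{j}}=\frac{2}{3}\Bigl(\frac{(2k+1)2^{2k}}{\binom{2k}{k}}-1\Bigr),
\]
is false. At $k=1$ the left side is $2$ and the right side is $10/3$, so the induction cannot go through. The mismatch you noticed with the displayed formula is therefore an error, not a matter of absorbing factors. The correct closed form is
\[
\sum_{j=1}^k \frac{2^{2j}}{\binom{2j}{j}}=\frac{2}{3}\Bigl(\frac{(k+1)2^{2k}}{\binom{2k}{k}}-1\Bigr)=\frac{2}{3}\cdot\frac{2^{2k}-C_k}{C_k}.
\]
Its induction step uses $a_{k+1}/a_k=2(k+1)/(2k+1)$ for $a_k=2^{2k}/\binom{2k}{k}$, and with it \eqref{wargel4} gives the stated identity immediately.

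Your remark that these terms are not of the form $\binom{k+q}{s}$ is also mistaken, and this is how the special cases are meant to drop out of \eqref{Prop6_1}. With Gamma-function binomials, $\binom{k}{1/2}=\frac{2}{\pi}\cdot\frac{2^{2k}}{\binom{2k}{k}}$ and $\binom{k+1}{3/2}-\binom{1}{3/2}=\frac{4}{3\pi}\bigl(\frac{2^{2k}}{C_k}-1\bigr)$. So $(q,s)=(0,\tfrac12)$, after multiplying by $\pi/2$, gives the first special case. Similarly $\binom{k-1/2}{-1/2}=\binom{2k}{k}2^{-2k}$ and $\binom{k+1/2}{1/2}-\binom{1/2}{1/2}=(2k+1)\binom{2k}{k}2^{-2k}-1$, so $(q,s)=(-\tfrac12,-\tfrac12)$ gives the second. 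Your direct partial-sum route works once the first formula is corrected. The specialization, however, makes both examples immediate corollaries of \eqref{Prop6_1} with no separate inductions.
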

\begin{proof}
	Use Pascal's formula
	$\binom{k+q}s=\binom{k+q+1}{s+1}-\binom{k+q}{s+1}$
	in Corollary \ref{zihreqe}.
\end{proof}
\begin{corollary} Let $q$ and $s$ be complex numbers and m be an odd integer. Then
\begin{align}
		&\sum_{k = 1}^\infty  \frac{\binom{k+q}{s}(H_{k+q}-H_{k+q-s})}{\prod\limits_{j = 0}^{2m - 1} w_{k + r + j}} \notag \\
		&\qquad\qquad\qquad  = v_m \sum_{k = 1}^\infty 
		\frac{\binom{k+q+1}{s+1}(H_{k+q+1}-H_{k+q-s})- \binom{q+1}{s+1}(H_{q+1}-H_{q-s})}{\prod\limits_{j = 0}^{m - 1} {w_{k + r + j} }\prod\limits_{j = m + 1}^{2m} {w_{k + r + j} } }\label{Cor13_1_diff_q}
		\end{align}
	and
		\begin{align}
		&\sum_{k = 1}^\infty (-1)^{k-1}
		\frac{
			\binom{q}{k+s}
			(H_q-H_{q-k-s})}
		{\prod\limits_{j = 0}^{2m - 1} w_{k+r+j}}\notag \\
		&\qquad\qquad\qquad=v_m
		\sum_{k = 1}^\infty
		\frac{(-1)^{k-1}
			\binom{q-1}{k+s}
			(H_{q-1}-H_{q-k-s-1}) +
			\binom{q-1}{s}
			(H_{q-1}-H_{q-s-1})}
		{\prod\limits_{j = 0}^{m - 1} w_{k+r+j}
			\prod\limits_{j = m + 1}^{2m} w_{k+r+j}
		}.
		\label{Cor13_2_diff_q}
		\end{align}
	
	In particular, by setting $s=1$ in  \eqref{Cor13_1_diff_q}, for odd integer $m$, we have 
			\begin{equation*}
		\sum_{k = 1}^\infty  \frac{1}{\prod\limits_{j = 0}^{2m - 1} w_{k + r + j}} = v_m \sum_{k = 1}^\infty 
		\frac{k}{\prod\limits_{j = 0}^{m - 1} {w_{k + r + j} }\prod\limits_{j = m + 1}^{2m} {w_{k + r + j} }}.
		\end{equation*}
		\begin{proof}
		Differentiate  \eqref{Prop6_1} and \eqref{Prop6_2} with respect to $q$, using  
		$\frac{\partial}{\partial m}\binom{m}{n}=\binom{m}{n}(H_m-H_{m-n}).$
	\end{proof}
\end{corollary}
\begin{corollary} Let $q$ and $s$ be complex numbers and m be an odd integer. Then
	\begin{align}\label{Cor9_1}
		&\sum_{k=1}^{\infty} \frac{\binom{k+q}{s} \big( H_{k+q-s} - H_{s} \big)}{\prod\limits_{j=0}^{2m-1} w_{k+r+j}}\notag\\
		&\qquad\qquad\qquad= v_m \sum_{k=1}^{\infty} \frac{ \binom{k+q+1}{s+1} \big( H_{k+q-s} - H_{s+1} \big) - \binom{q+1}{s+1} \left( H_{q-s} - H_{s+1} \right) }{\prod\limits_{j=0}^{m-1} w_{k+r+j} \prod\limits_{j=m+1}^{2m} w_{k+r+j}}
		\end{align}
	and 
\begin{align}\label{Cor9_2}
		&\sum_{k=1}^{\infty} (-1)^{k-1}\frac{ \binom{q}{k+s} \big( H_{k+s} - H_{q-k-s} \big)}{\prod\limits_{j=0}^{2m-1} w_{k+r+j}}\notag \\
		&\qquad\qquad\qquad= v_m \sum_{k=1}^{\infty} \frac{ (-1)^{k-1}\binom{q-1}{k+s} \big( H_{k+s} - H_{q-k-s-1} \big) + \binom{q-1}{s} ( H_{s} - H_{q-s-1} )}{\prod\limits_{j=0}^{m-1} w_{k+r+j} \prod\limits_{j=m+1}^{2m} w_{k+r+j}} .
		\end{align}
	\end{corollary}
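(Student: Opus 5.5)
We obtain the corollary from \eqref{Prop6_1} and \eqref{Prop6_2} exactly as the previous corollary was obtained, except that we differentiate with respect to $s$ instead of $q$. Throughout, harmonic numbers with complex index mean $H_z=\psi(z+1)+\gamma$, and $\binom{x}{y}=\Gamma(x+1)/\bigl(\Gamma(y+1)\Gamma(x-y+1)\bigr)$. The only analytic input is the lower-index analogue of the formula used before:
\begin{equation*}
\frac{\partial}{\partial s}\binom{n}{s}=\binom{n}{s}\bigl(\psi(n-s+1)-\psi(s+1)\bigr)=\binom{n}{s}\bigl(H_{n-s}-H_{s}\bigr).
\end{equation*}
This follows by logarithmic differentiation of the Gamma quotient. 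The $\gamma$'s cancel, so the result is stated in harmonic-number form.

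\emph{Identity \eqref{Cor9_1}.} Apply $\partial/\partial s$ to both sides of \eqref{Prop6_1}, term by term.
\begin{itemize}
\item On the left, the numerator $\binom{k+q}{s}$ becomes $\binom{k+q}{s}(H_{k+q-s}-H_s)$.
\item On the right, take $n=k+q+1$ and lower index $s+1$. The term $\binom{k+q+1}{s+1}$ becomes $\binom{k+q+1}{s+1}(H_{k+q-s}-H_{s+1})$, since $(k+q+1)-(s+1)=k+q-s$.
\item Likewise, the constant term $\binom{q+1}{s+1}$ becomes $\binom{q+1}{s+1}(H_{q-s}-H_{s+1})$.
\end{itemize}
The denominators do not depend on $s$, so this gives \eqref{Cor9_1} directly.

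\emph{Identity \eqref{Cor9_2}.} Differentiate \eqref{Prop6_2} with respect to $s$, using the same formula.
\begin{itemize}
\item With $n=q$ and lower index $k+s$: $\partial_s\binom{q}{k+s}=\binom{q}{k+s}(H_{q-k-s}-H_{k+s})$.
\item Similarly $\partial_s\binom{q-1}{k+s}=\binom{q-1}{k+s}(H_{q-k-s-1}-H_{k+s})$.
\item Similarly $\partial_s\binom{q-1}{s}=\binom{q-1}{s}(H_{q-s-1}-H_s)$.
\end{itemize}
Multiplying the differentiated identity by $-1$ flips every harmonic difference. This produces \eqref{Cor9_2}, including the sign pattern $(-1)^{k-1}$ on the first numerator term and the $+$ on the constant term.

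\emph{Main obstacle: termwise differentiation.} The step needing care is justifying the interchange of $\partial_s$ with the infinite sums. I would argue as follows.
\begin{itemize}
\item Under the paper's standing convergence assumption, the denominators $\prod_j w_{k+r+j}$ grow like $|\alpha|^{2mk}$ (respectively $|\alpha|^{2mk-k}$ for the product with one factor omitted), where $\alpha$ is the dominant root of $x^2-px-1$.
\item For $s$ in a compact set avoiding the poles, the summands and their $s$-derivatives grow at most polynomially in $k$. Indeed, $\binom{k+q}{s}=O(k^{\operatorname{Re}s})$, and the harmonic factors are $O(\log k)$.
\item In \eqref{Prop6_2}, $\binom{q}{k+s}$ and $\binom{q-1}{k+s}$ are $O(k^{-\operatorname{Re}q-1})$ uniformly.
\item So every series converges locally uniformly and is analytic in $s$. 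By Weierstrass' theorem it may be differentiated term by term, and the identity of analytic functions passes to their derivatives.
\end{itemize}
Excluded parameter values, where a harmonic number or binomial has a pole, are handled by continuity or by simply restricting $q,s$ to the generic set.
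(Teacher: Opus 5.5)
Your proposal is correct and follows the paper's own argument: differentiate \eqref{Prop6_1} and \eqref{Prop6_2} with respect to $s$ using $\partial_s\binom{n}{s}=\binom{n}{s}(H_{n-s}-H_s)$, then multiply the second identity by $-1$; your Weierstrass argument for termwise differentiation supplies a justification the paper leaves implicit. One harmless slip in that argument: the product $\prod_{j=0}^{m-1}w_{k+r+j}\prod_{j=m+1}^{2m}w_{k+r+j}$ still has $2m$ factors, so it grows like $|\alpha|^{2mk}$ rather than $|\alpha|^{(2m-1)k}$, which only strengthens your bound.
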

\begin{proof} Differentiate  \eqref{Prop6_1} and \eqref{Prop6_2} with respect to $s$, using  
	$\frac{\partial}{\partial n}\binom{m}{n}=\binom{m}{n}(H_{m-n}-H_{n})$.
\end{proof}

Note that identities \eqref{Cor13_1_diff_q}--\eqref{Cor9_2}  can be rewritten in terms of the odd harmonic numbers. Indeed, by substituting $q\mapsto q-\frac{1}{2}$
into   \eqref{Cor13_1_diff_q} and,  \eqref{Cor13_2_diff_q} and $s\mapsto s-\frac{1}{2}$
into   \eqref{Cor9_1} and \eqref{Cor9_2}, and using the identity $H_{s-1/2}=2 O_s+2\ln 2$, we obtain, respectively,
	\begin{align*}
	&\sum_{k = 1}^\infty  \frac{\binom{k+q-\frac12}{s}(O_{k+q}-O_{k+q-s})}{\prod\limits_{j = 0}^{2m - 1} w_{k + r + j}}\\
	&\qquad\qquad = v_m \sum_{k = 1}^\infty 
	\frac{\binom{k+q+\frac12}{s+1}(O_{k+q+1}-O_{k+q-s})- \binom{q+\frac12}{s+1}(O_{q+1}-O_{q-s})}{\prod\limits_{j = 0}^{m - 1} {w_{k + r + j} }\prod\limits_{j = m + 1}^{2m} {w_{k + r + j} } },
	\end{align*}
	\begin{align*}	
	&\sum_{k = 1}^\infty
	\frac{(-1)^{k-1}
		\binom{q-\frac12}{k+s}
		(O_q-O_{q-k-s})}
	{\prod\limits_{j = 0}^{2m - 1} w_{k+r+j}}
	\\
	&\qquad\qquad=v_m
	\sum_{k = 1}^\infty
	\frac{(-1)^{k-1}
		\binom{q-\frac32}{k+s}
		(O_{q-1}-O_{q-k-s-1})+
		\binom{q-\frac32}{s}
		(O_{q-1}-O_{q-s-1})}
	{\prod\limits_{j = 0}^{m - 1} w_{k+r+j}
		\prod\limits_{j = m + 1}^{2m} w_{k+r+j}
	},\\
	&\sum_{k=1}^{\infty} \frac{\binom{k+q}{s-\frac12} \big( O_{k+q-s+1} - O_{s} \big)}{\prod\limits_{j=0}^{2m-1} w_{k+r+j}}\notag\\
	&\qquad\qquad\qquad= v_m \sum_{k=1}^{\infty} \frac{ \binom{k+q+1}{s+\frac12} \big( O_{k+q-s+1} - O_{s+1} \big) - \binom{q+1}{s+\frac12} \left( O_{q-s+1} - O_{s+1} \right) }{\prod\limits_{j=0}^{m-1} w_{k+r+j} \prod\limits_{j=m+1}^{2m} w_{k+r+j}}, 
	\end{align*}
and 
\begin{align*}
	&\sum_{k=1}^{\infty} (-1)^{k-1}\frac{ \binom{q}{k+s-\frac12} \big( O_{q-k-s+1} - O_{k+s} \big)}{\prod\limits_{j=0}^{2m-1} w_{k+r+j}}\notag \\
	&\qquad\qquad\qquad= v_m \sum_{k=1}^{\infty} \frac{ (-1)^{k-1}\binom{q-1}{k+s-\frac12} \big( O_{q-k-s} - O_{k+s}\big) + \binom{q-1}{s-\frac12} ( O_{q-s}  - O_{s})}{\prod\limits_{j=0}^{m-1} w_{k+r+j} \prod\limits_{j=m+1}^{2m} w_{k+r+j}} .
	\end{align*}

Below, $[\cdot]$ and $\{\cdot\}$ denote the unsigned Stirling numbers of the first kind and the Stirling numbers of the second kind, respectively. They are defined by
\[
x^{\overline{n}}
=\sum_{k=0}^{n}\left[{n\atop k}\right]x^k,
\qquad
x^n
=\sum_{k=0}^{n}\left\{{n\atop k}\right\}x^{\underline{k}},
\]
where
$x^{\overline{n}}=x(x+1)\cdots(x+n-1)$ and $x^{\underline{n}}=x(x-1)\cdots(x-n+1)$ denote the rising and the falling  factorials, respectively; see \cite{Graham}. 
\begin{proposition}
	If $m$ is an odd integer, $n$ is a positive integer, and $r$ is an integer, then
	\begin{equation*}
		\sum_{k=1}^{\infty}
		\frac{(k+n)\left[{k+n\atop k}\right]}
		{\prod\limits_{j=0}^{2m-1}w_{k+r+j}}
		=
		v_m
		\sum_{k=1}^{\infty}
		\frac{\left[{k+n+1\atop k}\right]}
		{\prod\limits_{j=0}^{m-1}w_{k+r+j}
			\prod\limits_{j=m+1}^{2m}w_{k+r+j}}
		\end{equation*}
and
\begin{equation*}
		\sum_{k=1}^{\infty}
		\frac{k\left\{{k+n\atop k}\right\}}
		{\prod\limits_{j=0}^{2m-1}w_{k+r+j}}
		=
		v_m
		\sum_{k=1}^{\infty}
		\frac{\left\{{k+n+1\atop k}\right\}}
		{\prod\limits_{j=0}^{m-1}w_{k+r+j}
			\prod\limits_{j=m+1}^{2m}w_{k+r+j}}.
		\end{equation*}
\end{proposition}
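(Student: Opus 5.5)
The plan is to apply Theorem~\ref{yjr6p1s} in its odd-$m$ form \eqref{wargel4}, once with $y_k=(k+n)\left[{k+n\atop k}\right]$ and once with $y_k=k\left\{{k+n\atop k}\right\}$. For each choice the partial sum $Y_k=y_1+\cdots+y_k$ should collapse by telescoping to the single Stirling number appearing in the numerator on the right-hand side. Once that is shown, \eqref{wargel4} yields both identities verbatim.

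For the first kind, use the standard triangular recurrence $\left[{N+1\atop k}\right]=N\left[{N\atop k}\right]+\left[{N\atop k-1}\right]$ with $N=k+n$. This gives
\[
y_k=(k+n)\left[{k+n\atop k}\right]=\left[{k+n+1\atop k}\right]-\left[{k+n\atop k-1}\right].
\]
Set $a_k=\left[{k+n+1\atop k}\right]$. Then $\left[{k+n\atop k-1}\right]=\left[{(k-1)+n+1\atop k-1}\right]=a_{k-1}$, so $y_k=a_k-a_{k-1}$. Summing over $j=1,\dots,k$ gives $Y_k=a_k-a_0=\left[{k+n+1\atop k}\right]-\left[{n+1\atop 0}\right]$. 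Since $n\ge1$, we have $n+1\ge1$ and hence $\left[{n+1\atop 0}\right]=0$, so $Y_k=\left[{k+n+1\atop k}\right]$.

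For the second kind, the recurrence is $\left\{{N+1\atop k}\right\}=k\left\{{N\atop k}\right\}+\left\{{N\atop k-1}\right\}$. With $N=k+n$, the same argument applied to $b_k=\left\{{k+n+1\atop k}\right\}$ gives $k\left\{{k+n\atop k}\right\}=b_k-b_{k-1}$. Therefore $Y_k=b_k-\left\{{n+1\atop 0}\right\}=\left\{{k+n+1\atop k}\right\}$.

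Substituting these $y_k$ and $Y_k$ into \eqref{wargel4} gives the two displayed formulas.

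The only point needing care is the hypothesis of \eqref{Lem1_infty} that underlies Theorem~\ref{yjr6p1s}. We need $x_{k+1}Y_k\to0$, where $x_k=\bigl(\prod_{j=0}^{2m-1}w_{k+r+j}\bigr)^{-1}$, and we also need convergence of both series. For fixed $n$, both $\left[{k+n+1\atop k}\right]$ and $\left\{{k+n+1\atop k}\right\}$ are polynomials in $k$ of degree $2n+2$. On the other hand, $\prod_{j=0}^{2m-1}w_{k+r+j}$ grows like $\alpha^{2mk}$, where $|\alpha|>1$ is the dominant root of $x^2-px-1$; this holds in the standing setting where the series converge, and the paper assumes convergence throughout. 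So the boundary term vanishes and all series converge absolutely. This growth comparison is the step I expect to need the most justification, though it is routine; the telescoping itself is immediate from the recurrences.
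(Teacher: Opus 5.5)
Your proposal is correct and follows the paper's proof: the paper also applies \eqref{wargel4} with these two choices of $y_k$. It quotes the partial-sum identities $\sum_{j=1}^{k}(j+n)\left[{j+n\atop j}\right]=\left[{k+n+1\atop k}\right]$ and $\sum_{j=1}^{k}j\left\{{j+n\atop j}\right\}=\left\{{k+n+1\atop k}\right\}$ from Graham, Knuth and Patashnik, whereas you derive them by telescoping the triangular recurrences, correctly using $n\ge 1$ to make the $k=0$ terms vanish. Your growth argument for the boundary term $x_{k+1}Y_k$ goes slightly beyond the paper, which leaves convergence implicit.
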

\begin{proof}
	Use the identities \cite[p.~251]{Graham}
	\[
	\sum_{j=1}^{k}(j+n)
	\left[{j+n\atop j}\right]
	=
	\left[{k+n+1\atop k}\right],
	\qquad
	\sum_{j=1}^{k}
	j\left\{{j+n\atop j}\right\}
	=
	\left\{{k+n+1\atop k}\right\},
	\]
	together with Theorem \ref{yjr6p1s}.
\end{proof}

From Corollary \ref{zihreqe}, we can also derive numerous formulas related to polynomials. We present only those related to the Chebyshev polynomials of the first and second kinds, $T_n(x)$ and $U_n
(x)$, respectively, which are defined recursively by
	\begin{equation*}
	U_0=1,\,\, U_1(x)=2x,\quad U_n(x)=2xU_{n-1}(x)-U_{n-2}(x),\qquad n\geq2,
	\end{equation*}
and
	\begin{equation*}
	T_0=1,\,\, T_1(x)=x,\quad T_n(x)=2xT_{n-1}(x)-T_{n-2}(x),\qquad n\geq2.
	\end{equation*}
\begin{proposition}
	If $m$ is an odd integer, $n$ is a positive integer and $r$ is an integer, then
		\begin{equation*}
		2x\sum_{k = 1}^\infty (-1)^{k-1}  \frac{U^2_{k+s}(x)}{\prod\limits_{j = 0}^{2m - 1} w_{k + r + j}}  =  v_m \sum_{k = 1}^\infty \frac{(-1)^{k-1}U_{k+s+1}(x)U_{k+s}(x) + U_{s+1}(x)U_{s}(x)}{\prod\limits_{j = 0}^{m - 1} w_{k + r + j}  \prod\limits_{j = m + 1}^{2m} w_{k + r + j}}
		\end{equation*}
and
	\begin{equation*}
		2x\sum_{k = 1}^\infty (-1)^{k-1}  \frac{T^2_{k+s}(x)}{\prod\limits_{j = 0}^{2m - 1} w_{k + r + j}}  =  v_m \sum_{k = 1}^\infty \frac{(-1)^{k-1}T_{k+s+1}(x)T_{k+s}(x) + T_{s+1}(x)T_{s}(x)}{\prod\limits_{j = 0}^{m - 1} w_{k + r + j}  \prod\limits_{j = m + 1}^{2m} w_{k + r + j}},
		\end{equation*}
where 
	$|x|\le 2^{-m-1}\Big(\big(p+\sqrt{p^2+4}\big)^m+\big(-p+\sqrt{p^2+4}\big)^{m}\Big)$.
\end{proposition}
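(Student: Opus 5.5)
The identity follows from \eqref{y+y} in Corollary~\ref{zihreqe} (the case $m$ odd) by choosing the sequence $y_k$ so that $y_k+y_{k-1}$ collapses to a square. For the first identity I take
\[
y_k=U_{k+s+1}(x)\,U_{k+s}(x),\qquad k\ge 0 .
\]
Then $y_0=U_{s+1}(x)U_s(x)$. Using the Chebyshev recurrence $U_{k+s+1}(x)+U_{k+s-1}(x)=2xU_{k+s}(x)$, I get
\[
y_k+y_{k-1}=U_{k+s}(x)\bigl(U_{k+s+1}(x)+U_{k+s-1}(x)\bigr)=2x\,U_{k+s}^2(x).
\]
Substituting these into \eqref{y+y}, the left side becomes $2x\sum_{k\ge1}(-1)^{k-1}U_{k+s}^2(x)/\prod_{j=0}^{2m-1}w_{k+r+j}$. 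The right side is exactly $v_m\sum_{k\ge1}\bigl((-1)^{k-1}U_{k+s+1}(x)U_{k+s}(x)+U_{s+1}(x)U_s(x)\bigr)/\bigl(\prod_{j=0}^{m-1}w_{k+r+j}\prod_{j=m+1}^{2m}w_{k+r+j}\bigr)$. The second identity is identical with $y_k=T_{k+s+1}(x)T_{k+s}(x)$, since $T_n$ obeys the same three-term recurrence $T_{n+1}+T_{n-1}=2xT_n$. If $s=0$, the index $k+s-1=-1$ appears at $k=0$ and needs no special treatment: the relation $y_k+y_{k-1}$ is only used for $k\ge1$, where all indices are nonnegative.

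The only real issue is justifying the use of \eqref{y+y}, which rests on Theorem~\ref{yjr6p1s} and hence on \eqref{Lem1_infty}. For that I must check two things: that the series converge, and that the boundary term $x_{n+1}\sum_{j\le n}(\pm1)^j(y_j\pm y_{j-1})$ tends to $0$, where $x_{n+1}=1/\prod_{j=0}^{2m-1}w_{n+1+r+j}$.

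Write $\alpha=(p+\sqrt{p^2+4})/2$ and $\beta=(p-\sqrt{p^2+4})/2$. Then the products of $2m$ (respectively $2m$ with one factor removed) consecutive $w$'s grow like $\alpha^{2mk}$, up to a bounded factor. This assumes the generic situation where the $\alpha$-component of $w$ is nonzero, the same standing convergence assumption the paper uses throughout. The partial sums of the $y$'s are $O(U_{k+s}^2)$ or $O(T_{k+s}^2)$, which grow at most like $\rho(x)^{2k}$ times a polynomial factor, where $\rho(x)=|x|+\sqrt{|x^2-1|}$ (this is at most $2|x|+1$).

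The hypothesis on $|x|$ reads $|x|\le(\alpha^m+|\beta|^m)/2$, after noting $2^{-m-1}\bigl((p+\sqrt{p^2+4})^m+(-p+\sqrt{p^2+4})^m\bigr)=(\alpha^m+|\beta|^m)/2$. I expect to show this gives $\rho(x)<\alpha^m$, so that $\rho(x)^{2k}/\alpha^{2mk}\to0$ geometrically. This simultaneously yields absolute convergence of all four series and the vanishing of the boundary term.

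This growth comparison is the step I expect to be the main obstacle. The worry is the endpoint $|x|$ close to $\alpha^m/2$, where $\rho(x)\approx\alpha^m$. If the stated bound turns out to be slightly too generous there, I would fall back on arguing convergence directly under the paper's global assumption that all involved series converge. The algebraic part of the proof is then complete.
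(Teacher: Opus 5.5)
Your algebraic argument is the paper's proof. The paper substitutes $y_k=U_{k+s+1}(x)U_{k+s}(x)$ (resp.\ $T_{k+s+1}(x)T_{k+s}(x)$) into \eqref{y+y}, collapsing $y_k+y_{k-1}$ to $2xU_{k+s}^2(x)$ by the three-term recurrence exactly as you do. It says nothing about convergence beyond the stated bound on $|x|$.

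Your worry about the endpoint is justified, and the step ``the hypothesis gives $\rho(x)<\alpha^m$'' cannot be carried out, because the statement itself fails there.

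\textbf{The endpoint.} Since $\alpha\beta=-1$ and $m$ is odd, the bound equals $(\alpha^m+\alpha^{-m})/2=\cosh(m\ln\alpha)$. For real $x=\cosh t\ge1$ one has $x+\sqrt{x^2-1}=e^t$, so when $x$ equals the bound, $x+\sqrt{x^2-1}=\alpha^m$ exactly. Then $U_{k+s}^2(x)$, $T_{k+s}^2(x)$, the numerators on the right, and both $2m$-factor denominators all grow like constants times $\alpha^{2mk}$. So the general terms on both sides tend to nonzero constants and both series diverge. For example, with $p=1$, $m=1$, $w=F$, $r=s=0$, $x=\sqrt5/2$, one gets $U_k(x)=\alpha^{k+1}-(-\alpha)^{-(k+1)}$ and $U_k(x)^2/(F_kF_{k+1})\to5\alpha$. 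Hence the hypothesis must be the strict inequality $|x|<(\alpha^m+\alpha^{-m})/2$, with $x$ real.

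\textbf{Complex $x$.} Here the correct region is $\max\bigl|x\pm\sqrt{x^2-1}\bigr|<\alpha^m$. This is an ellipse with semi-minor axis $(\alpha^m-\alpha^{-m})/2$. So $x=iR$ with $(\alpha^m-\alpha^{-m})/2<R\le(\alpha^m+\alpha^{-m})/2$ gives divergent series, although it satisfies the stated bound.

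\textbf{Your majorant.} $\rho(x)=|x|+\sqrt{|x^2-1|}$ is also too crude for real $|x|<1$. It equals $\sqrt2$ at $x=1/\sqrt2$, which exceeds $\alpha^m$ when $m=1$ and $p<1/\sqrt2$. Yet there $|U_n(x)|\le n+1$ and $|T_n(x)|\le1$, so the series converge and your comparison simply fails to see it.

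\textbf{The repair.} For real $x$ under the strict inequality, use the exact growth rate $\max\{1,|x|+\sqrt{x^2-1}\}$. Your geometric comparison then gives absolute convergence of all four series and the vanishing of the boundary term, and the rest of your proof goes through unchanged.
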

\begin{proof}
	Use $y_k=U_{k+s+1}U_{k+s}$ or $y_k=T_{k+s+1}T_{k+s}$ in \eqref{y+y}.
\end{proof}

\section{Additional Applications}

The Abel-type transformation established in Lemma \ref{Lem1_infty} possesses a remarkable symmetry, which immediately yields the following identity.
\begin{proposition}\label{new}
	Under the assumptions of Lemma \ref{Lem1_infty},
	\begin{equation}\label{einzt6m}
	\sum_{k=1}^{\infty}
	\left(x_k-x_{k+1}\right)
	\sum_{j=1}^{k}y_j
	=
	\sum_{k=1}^{\infty}
	\left(y_k-y_{k+1}\right)
	\sum_{j=1}^{k}x_j.
	\end{equation}
\end{proposition}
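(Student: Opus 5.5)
The plan is to apply \eqref{Lem1_infty} twice, once as stated and once with the roles of $(x_k)$ and $(y_k)$ interchanged. Both applications have the same left-hand side $\sum_{k\ge1}x_ky_k$, so the two right-hand sides must agree, and that equality is exactly \eqref{einzt6m}.

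First, the hypothesis $\lim_{n\to\infty}x_{n+1}\sum_{j=1}^n y_j=0$ of Lemma~\ref{main_lem}, together with convergence, gives
\[
\sum_{k=1}^\infty x_ky_k=\sum_{k=1}^\infty (x_k-x_{k+1})\sum_{j=1}^k y_j .
\]
Second, because $\sum_k x_ky_k=\sum_k y_kx_k$ is symmetric, the same lemma with the roles swapped gives
\[
\sum_{k=1}^\infty y_kx_k=\sum_{k=1}^\infty (y_k-y_{k+1})\sum_{j=1}^k x_j .
\]
This swapped application needs the swapped boundary condition $y_{n+1}\sum_{j=1}^n x_j\to0$. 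Equating the two displays then yields the claim.

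The main obstacle is precisely this symmetric boundary condition, which the statement covers only implicitly through ``under the assumptions of Lemma~\ref{Lem1_infty}''. I would interpret those assumptions symmetrically: both boundary terms tend to zero and $\sum x_ky_k$ converges.

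A cleaner route avoids invoking the lemma twice. Write $X_n=\sum_{j\le n}x_j$ and $Y_n=\sum_{j\le n}y_j$. A direct finite computation, or subtracting the two instances of \eqref{Lemma1_1}, gives
\[
\sum_{k=1}^n (x_k-x_{k+1})Y_k-\sum_{k=1}^n (y_k-y_{k+1})X_k = y_{n+1}X_n - x_{n+1}Y_n ,
\]
since each side equals $\sum_{k\le n}x_ky_k$ minus its own boundary term. Letting $n\to\infty$, the identity \eqref{einzt6m} holds precisely when $y_{n+1}X_n-x_{n+1}Y_n\to0$ and the two series converge. I would state this as the precise condition and note that it holds whenever both boundary terms vanish.
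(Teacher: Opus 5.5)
Your proof is correct and is essentially the paper's own argument. You apply \eqref{Lem1_infty} once as stated and once with $(x_k)$ and $(y_k)$ interchanged, which requires both $x_{n+1}\sum_{j=1}^{n}y_j\to0$ and $y_{n+1}\sum_{j=1}^{n}x_j\to0$. The paper simply asserts the second limit, so your symmetric reading of the hypothesis, together with your finite identity isolating $y_{n+1}X_n-x_{n+1}Y_n$, makes explicit what the paper leaves implicit. The one-sided condition alone does not suffice: for example, $x_k\equiv1$ and $Y_n=(-1)^n/\sqrt{n}$ satisfy it, but the right-hand side of \eqref{einzt6m} diverges.
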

\begin{proof}
	The identity follows immediately by interchanging the sequences
	\(x_j\) and \(y_j\) in \eqref{Lem1_infty}. Moreover,
	\[
	\lim_{n\to\infty}
	x_{n+1}\sum_{j=1}^{n}y_j=0,
	\qquad
	\lim_{n\to\infty}
	y_{n+1}\sum_{j=1}^{n}x_j=0,
	\]
	so the assumptions of Lemma \ref{Lem1_infty} remain satisfied.
\end{proof}

The following examples illustrate several applications of Proposition \ref{new}.
\begin{example} Applying Proposition \ref{new} with
	\[
	(x_j,y_j)
	=
	\left(\frac1{w_j},\frac1{u_j}\right),
	\quad (x_j,y_j)=
	\left(\frac1{w_jw_{j+1}},\frac1{u_ju_{j+1}}\right),
	\quad (x_j,y_j)=
	\left(\frac{(-1)^j}{w_jw_{j+1}},\frac1{u_ju_{j+1}}\right),
	\]
	we obtain, respectively,
	\begin{gather*}
	\sum_{k=1}^{\infty}
	\frac{pw_{k+1}+2w_k}
	{w_kw_{k+1}w_{k+2}}
	\sum_{j=1}^{k}
	\frac1{u_ju_{j+1}}
	=p
	\sum_{k=1}^{\infty}
	\frac1{u_ku_{k+2}}
	\sum_{j=1}^{k}
	\frac{(-1)^j}{w_jw_{j+1}},\notag\\
	\sum_{k=1}^{\infty}
	\frac1{w_kw_{k+2}}
	\sum_{j=1}^{k}
	\frac1{u_ju_{j+1}}
	=\sum_{k=1}^{\infty}
	\frac1{u_ku_{k+2}}
	\sum_{j=1}^{k}
	\frac1{w_jw_{j+1}},\notag\\
	\sum_{k=1}^{\infty}
	(-1)^k
	\frac{pw_{k+1}+2w_k}
	{w_kw_{k+1}w_{k+2}}
	\sum_{j=1}^{k}
	\frac1{u_ju_{j+1}}
	=p
	\sum_{k=1}^{\infty}
	\frac1{u_ku_{k+2}}
	\sum_{j=1}^{k}
	\frac{(-1)^j}{w_jw_{j+1}}.
	\end{gather*}
	
	Using the telescoping evaluation
	\begin{equation*}
	\sum_{j=1}^{k}
	\frac{(-1)^j}{w_jw_{j+1}}
	=
	\frac1{b^2-abp-a^2}
	\left(
	\frac{a}{b}
	-
	\frac{w_k}{w_{k+1}}
	\right),
	\end{equation*}
	the last identity simplifies to
	\begin{equation*}
	\sum_{k=1}^{\infty}
	(-1)^k
	\frac{pw_{k+1}+2w_k}
	{w_kw_{k+1}w_{k+2}}
	\sum_{j=1}^{k}
	\frac1{u_ju_{j+1}}
	=
	\frac{p}{b^2-abp-a^2}
	\sum_{k=1}^{\infty}
	\frac1{u_ku_{k+2}}
	\left(
	\frac{a}{b}
	-
	\frac{w_k}{w_{k+1}}
	\right).
	\end{equation*}
\end{example}

Specializing to the Fibonacci and Lucas numbers, we obtain the following identities.
\begin{corollary}
	\begin{gather*}
	\sum_{k=1}^{\infty}
	\frac1{F_kF_{k+2}}
	\sum_{j=1}^{k}
	\frac1{L_jL_{j+1}}
	=
	\sum_{k=1}^{\infty}
	\frac1{L_kL_{k+2}}
	\sum_{j=1}^{k}
	\frac1{F_jF_{j+1}},\\
	\sum_{k=1}^{\infty}
	\frac{(-1)^k}{F_{k+1}F_{k+2}}
	=
	5
	\sum_{k=1}^{\infty}
	(-1)^k
	\frac{F_k}{L_kL_{k+1}L_{k+2}}
	\sum_{j=1}^{k}
	\frac1{F_jF_{j+1}}.
	\end{gather*}
\end{corollary}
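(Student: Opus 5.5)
The first identity is a direct specialization of the Example. I would take the second identity displayed there, which comes from Proposition~\ref{new} with $(x_j,y_j)=\bigl(1/(w_jw_{j+1}),\,1/(u_ju_{j+1})\bigr)$, and put $w_n=L_n$ ($a=2$, $b=1$, $p=1$), so that $u_n=F_n$. The Example's identity then reads
\[
\sum_{k\ge1}\frac{1}{L_kL_{k+2}}\sum_{j=1}^k\frac{1}{F_jF_{j+1}}=\sum_{k\ge1}\frac{1}{F_kF_{k+2}}\sum_{j=1}^k\frac{1}{L_jL_{j+1}},
\]
which is the claim with the two sides exchanged. The hypotheses of Lemma~\ref{main_lem} are easy to check here. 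The partial sums $\sum_{j\le n}1/(F_jF_{j+1})$ and $\sum_{j\le n}1/(L_jL_{j+1})$ are bounded, while $1/(L_{n+1}L_{n+2})$ and $1/(F_{n+1}F_{n+2})$ tend to $0$, so both boundary terms vanish.

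For the second identity I would not go through the Example's third formula, because with $w=L$ its weight is $pw_{k+1}+2w_k=L_{k+1}+2L_k=L_k+L_{k+2}=5F_{k+1}$, which gives $F_{k+1}$ rather than the required $F_k$. Instead I would apply \eqref{Lem1_infty} directly with $y_j=1/(F_jF_{j+1})$ and an alternating $x_k=(-1)^k g_k$. Here $g_k$ is a Lucas-type rational function to be determined, subject to two conditions:
\[
g_k+g_{k+1}=\frac{5F_k}{L_kL_{k+1}L_{k+2}},\qquad x_ky_k=\frac{(-1)^k\,c}{F_{k+1}F_{k+2}}
\]
for a suitable constant $c$, possibly after an index shift. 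When both hold, $x_k-x_{k+1}=(-1)^k(g_k+g_{k+1})$, and the right-hand side of \eqref{Lem1_infty} is exactly $\sum(-1)^k\frac{5F_k}{L_kL_{k+1}L_{k+2}}\sum_{j\le k}\frac1{F_jF_{j+1}}$. It remains to identify $\sum x_ky_k$ with $\sum(-1)^k/(F_{k+1}F_{k+2})$.

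To find $g_k$, I would try the ansatz $g_k=c_k/(L_kL_{k+1})$. The first condition then becomes the linear relation $c_kL_{k+2}+c_{k+1}L_k=5F_k$, and I would solve it with $c_k$ a combination of $F$ and $L$ terms. The useful identities are $L_{k+2}+L_k=5F_{k+1}$, $L_{k+2}-L_k=L_{k+1}$, and $F_kL_{k+1}-F_{k+1}L_k=2(-1)^{k+1}$. A constant $c_k$ produces $5F_{k+1}$, and $c_k=(-1)^k$ produces $(-1)^kL_{k+1}$, so I would try mixing these with $c_k$ proportional to $F_{k-1}$ or $F_k$. Once $g_k$ is fixed, I would simplify $g_k/(F_kF_{k+1})$ using $L_n=F_{n-1}+F_{n+1}$, together with the telescoping evaluation of $\sum_{j}(-1)^j/(w_jw_{j+1})$ quoted in the Example. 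The aim is to show that $\sum x_ky_k$ differs from $\sum(-1)^k/(F_{k+1}F_{k+2})$ only by a telescoping piece with zero total.

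This matching step is where I expect the real difficulty. The coefficient $5$ and the factor $F_k$ (not $F_{k+1}$) must come out exactly, so the first thing I would do is test the candidate $g_k$ numerically at $k=1,2,3$ before doing the algebra. The boundary condition $x_{n+1}\sum_{j\le n}y_j\to0$ is harmless, since the inner sum is bounded and $g_n\to0$ geometrically.
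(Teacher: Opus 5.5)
Your treatment of the first identity is correct and follows the paper's own route. It is the Example's second formula with $w=L$, $u=F$, and both sides equal $\sum_{k\ge1}1/(F_{2k}F_{2k+2})$. Your boundary-term check is adequate.

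For the second identity, the matching step you defer cannot be carried out, because the identity as printed is false. Its left side telescopes, with partial sums $-F_n/F_{n+2}$, so it equals $-(3-\sqrt5)/2\approx-0.381966$. The right side sums numerically to about $-0.368313$. Your construction also has no freedom left to tune. Since $\sum_{j\le n}1/(F_jF_{j+1})$ has a nonzero limit, the boundary condition forces $x_k\to0$. Then $x_k-x_{k+1}=(-1)^k\,5F_k/(L_kL_{k+1}L_{k+2})$ determines $x_k$ uniquely as a tail sum; explicitly
\[
x_k=\frac{2(-1)^k}{L_kL_{k+1}}+\frac1\phi-\frac{L_k}{L_{k+1}},\qquad \phi=\frac{1+\sqrt5}{2}.
\]
With this $x_k$, the sum $\sum_k x_ky_k$ contains the non-alternating, non-telescoping piece $\sum_k\frac{1}{F_kF_{k+1}}\bigl(\frac1\phi-\frac{L_k}{L_{k+1}}\bigr)$. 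Your two conditions are in fact incompatible. The condition $x_ky_k=(-1)^kc/(F_{k+a}F_{k+a+1})$, with or without a shift $a$, forces $x_k=(-1)^kcF_kF_{k+1}/(F_{k+a}F_{k+a+1})$. That does not tend to $0$ unless $c=0$, contradicting your own requirement that $g_k\to0$.

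You correctly noticed that the Example's third formula produces $F_{k+1}$ rather than $F_k$. That mismatch points to an error in the printed corollary, not in your method. Specializing that formula as the paper intends, using $\sum_{j\le k}(-1)^j/(L_jL_{j+1})=-F_k/L_{k+1}$, gives
\[
5\sum_{k\ge1}(-1)^k\frac{F_{k+1}}{L_kL_{k+1}L_{k+2}}\sum_{j=1}^{k}\frac1{F_jF_{j+1}}=-\sum_{k\ge1}\frac1{L_{k+1}F_{k+2}}=\sum_{k\ge1}\frac{(-1)^k}{F_{2k}F_{2k+2}}\approx-0.29686.
\]
This is a true identity, but a different one. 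The right conclusion is that the second displayed identity of the corollary is misstated, not that some cleverer $g_k$ exists.
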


\section{Conclusion}

We have developed a unified method for transforming infinite series involving products of terms of second-order linear recurrences in the denominator. The method is based on a discrete Abel-type summation formula, which converts such series into expressions with a partial telescoping or alternating structure.

Using this approach, we obtained general transformation formulas for reciprocal series with three factors and extended them to products of four or more terms. These results provide a systematic framework that unifies and extends many known identities for Fibonacci and Lucas numbers.

In addition, the method yields explicit identities involving several classical combinatorial sequences, including Catalan numbers, harmonic numbers, and Stirling numbers of both kinds. This shows that the approach is flexible and connects reciprocal series with different areas of combinatorics.

A notable feature of the method is that it naturally distinguishes between even and odd values of the parameters, leading to different types of representations. This reflects an inherent structural property of reciprocal series associated with second-order recurrences.

Possible directions for future work include extensions to more general recurrence relations, $q$-analogues, and further connections with generating functions and special functions.

\end{document}